\newcommand{\reff}[1]{(\ref{#1})}
\theoremstyle{plain}
\newtheorem{theo}{Theorem}[section]
\newtheorem{corollary}[theo]{Corollary}
\newtheorem{cor}[theo]{Corollary}
\newtheorem{prop}[theo]{Proposition}
\newtheorem{lem}[theo]{Lemma}
\theoremstyle{remark}
\newtheorem{rem}[theo]{Remark}
\newcommand{\cp}{{\mathcal P}}
\newcommand{\cy}{{\mathcal Y}}
\newcommand{\E}{{\mathbb E}}
\renewcommand{\P}{{\mathbb P}}
\newcommand{\ind}{{\bf 1}}
\newcommand{\val}[1]{\mathop{\left| #1 \right|}\nolimits}
\newcommand{\inv}[1]{\mathop{\frac{1}{ #1}}\nolimits}
\newcommand{\expp}[1]{\mathop {\mathrm{e}^{ #1}}}
\newcommand{\ixen}{X^{(n)}}
\newcommand{\igreken}{Y^{(n)}}
\newcommand{\aen}{A^{(n)}}
\newcommand{\taen}{\tilde A^{(n)}}
\newcommand{\haen}{\hat A^{(n)}}
\newcommand{\ton}{\tau^{(n)}}
\newcommand{\eren}{R^{(n)}}
\newcommand{\teen}{T^{(n)}}
\newcommand{\sigmen}{\sigma^{(n)}}
\begin{document}

\title{On the length of an external branch in the Beta-coalescent}

\date{\today}

\author{Jean-St\'ephane Dhersin}
\address{D\'epartement de Math\'ematiques, Universit\'e Paris 13, 99 av. J-B. Cl\'ement, F-93430 Villetaneuse, France}
\email{dhersin@math.univ-paris13.fr}

 \author{Fabian Freund}
  \address{Crop Plant Biodiversity and Breeding Informatics Group (350b), Institute of Plant Breeding, Seed Science and Population Genetics,   University of Hohenheim, Fruwirthstrasse 21, 70599 Stuttgart, Germany}
 \email{ffreund@uni-hohenheim.de}

\author{Arno Siri-J{\'e}gousse}
\address{CIMAT, A.C., Guanajuato, Mexico}
\email{arno@cimat.mx}

\author{Linglong Yuan}
\address{D\'epartement de Math\'ematiques, Universit\'e Paris 13, 99 av. J-B. Cl\'ement, F-93430 Villetaneuse, France}
\email{yuan@math.univ-paris13.fr}

\begin{abstract}
 In this paper, we consider Beta$(2-{\alpha},{\alpha})$ (with $1<{\alpha}<2$) and related ${\Lambda}$-coalescents. If $T^{(n)}$ denotes the length of an external branch of the $n$-coalescent, we prove the convergence of $n^{{\alpha}-1}T^{(n)}$ when $n$ tends to $ \infty $, and give the limit. To this aim, we give asymptotics for the number $\sigma^{(n)}$ of collisions which occur in the $n$-coalescent until the end of the chosen external branch, and for the block counting process associated with the $n$-coalescent. \end{abstract}

\keywords{Coalescent process, Beta-coalescent, external branch, block counting process, recursive construction}
\subjclass[2010]{60J70, 60J80, 60J25,  60F05,  92D25}

\maketitle

\section{Introduction}
\subsection{Motivation and main results }
In modern genetics, it is possible to sequence whole genomes of individuals. In order to put this information to maximal use, it is important to have well-fitting models for the gene genealogy 
of a sample of individuals. The standard model for a gene genealogy of a sample of $n$ individuals is Kingman's $n$-coalescent (see \cite{MR671034}, \cite{MR633178}). Kingman's $n$-coalescent is a continuous-time 
Markov process with state space $\mathcal{P}^{(n)}$, the set of partitions of $\left\{ 1,\ldots,n\right\}$. The process starts in the trivial partition $(\{1\},\ldots,\{n\})$ and transitions are only possible as mergers of exactly two blocks of the current state. Each 
such binary merger occurs with rate 1. These mergers are also called collisions.

For many populations, Kingman's $n$-coalescent describes the genealogy quite well. Kingman showed in \cite{MR633178} that the ancestral trees of a sample of size $n$ in populations with size $N$ 
evolving by a Wright-Fisher model will converge weakly to Kingman's $n$-coalescent for $N\to\infty$ (after a suitable time-change). This result is relatively robust if population evolution deviates from 
the Wright-Fisher model (see \cite{MR633178} or \cite{MR1808909}). However, there is evidence that there are populations where the gene genealogy of a sample is not described well by Kingman's $n$-coalescent. Examples
of such populations can be found in maritime species, where one individual can have a huge number of offspring with non-negligible probability (see \cite{arnason}, \cite{boom1994mdv} \cite{hedgecock}, \cite{MR2365877} and \cite{Eldon2006}).

A whole class of potential models for the gene genealogy of a sample was introduced independently by Pitman and Sagitov (see \cite{MR1742892} and \cite{MR1742154}): The class of $n$-coalescents with multiple collisions. 
A $n$-coalescent 
with multiple collisions is a continuous-time Markov process with state space $\mathcal{P}^{(n)}$, where all possible transitions are done by merging two or more blocks of the current state into one 
new block. Every $n$-coalescent $\Pi^{(n)}$ is exchangeable, meaning $\tau\circ \Pi^{(n)}\stackrel{d}{=} \Pi^{(n)}$ for every permutation $\tau$ of $\left\{ 1,\ldots,n\right\}$. The transition rate of a merger/collision of $k$ of $b$ present blocks is given by
\begin{equation}\label{eq:Prates}
\lambda_{b,k}=\int_0^1 x^k(1-x)^{b-k}x^{-2} \Lambda(dx)
\end{equation}
for a finite measure $\Lambda$ on $[0,1]$ (this definition is due to Pitman \cite{MR1742892}). Since the process is characterized by the measure $\Lambda$, it is also called a $\Lambda$-$n$-coalescent. Note that Kingman's $n$-coalescent is a $\Lambda$-$n$-coalescent with $\Lambda$ being the Dirac measure $\delta_0$ in 0.\\
{It is possible to define a continuous-time process $\Pi$ with state space $\mathcal{P}$, the partitions of the natural numbers $\mathbb{N}$, whose restriction on $\{1,\ldots,n\}$ is a $\Lambda$-$n$-coalescent for all $n\in\mathbb{N}$. Such a process is called a $\Lambda$-coalescent.\\}
An important subclass of $\Lambda$-$n$-coalescents are Beta $n$-coalescents characterized by $\Lambda$ being a Beta distribution, especially for the choice of parameters $2-\alpha$ and $\alpha$ for 
$\alpha\in(0,2)$. The class of $Beta(2-\alpha,\alpha)$-$n$-coalescents appears as ancestral trees in various settings. They appear in the context of supercritical Galton-Watson processes 
(see \cite{MR1983046}), of continuous-state branching processes (see \cite{MR2120246}) and of continuous random trees (see \cite{MR2349577}). They also seem to be a class where suitable models for the ancestral 
tree can be found for samples from species who do not fit well with the Kingman-setting (see \cite{BirknerBlath2008_JMB}). Note that for $\alpha\to 2$, the rates of the $Beta(2-\alpha,\alpha)$-$n$-coalescent converge 
to the rates of Kingman's $n$-coalescent. In this sense, Kingman's $n$-coalescent can be seen as a border case of this class of Beta $n$-coalescents.

For $\alpha=1$, $Beta(2-\alpha,\alpha)$ is the uniform distribution on $[0,1]$. The corresponding $n$-coalescent is the Bolthausen-Sznitman $n$-coalescent. It appears in the field of spin glasses (see \cite{bolthausen1998ruelle}, \cite{bovier2007much}) and is also connected to random recursive trees (see \cite{MR2164028}).

Let us denote by ${\Pi}^{(n)}=(\Pi^{(n)}_t)_{t\geq 0}$ a $n$-coalescent. In this paper, we are interested in three functionals of $n$-coalescents
\begin{itemize}
\item the length $T^{(n)}$ of a randomly chosen external branch ;
\item the number $\sigma^{(n)}$ of collisions which occur in the $n$-coalescent until the end of a randomly chosen external branch ;
\item the block counting process $R^{(n)}=(R^{(n)}_t)_{t\geq 0}$: $R^{(n)}_t=|\Pi^{(n)}_t|$ is the number of blocks of $\Pi^{(n)}_t$.
\end{itemize}
 Note that $T^{(n)}$ can also be characterized as the waiting time for the first collision of a randomly chosen individual and $\sigmen$ as the number of collisions we have to wait to see the randomly chosen individual merge. For $i\in\left\{ 1,\ldots,n\right\}$ define
$$T^{(n)}_i:=\inf\left\{t|\left\{ i\right\}\notin \Pi^{(n)}_t\right\}$$
as the length of the $i$th external branch and
$$\sigma^{(n)}_i:=\inf\left\{k|\{i\}\notin \pi_k\right\}$$
as the number of collisions until the end of the $i$th external branch, where $\pi_k$ is the state of the $n$-coalescent after $k$ jumps. Due to the exchangeability of the $n$-coalescent, we have $T^{(n)}\stackrel{d}{=} T^{(n)}_1$ and $\sigma^{(n)}\stackrel{d}{=}\sigma^{(n)}_1$. Since we are only interested in distributional results, for the remainder of the article we will identify $T^{(n)}$ with $T^{(n)}_1$ and $\sigma^{(n)}$ with $\sigma^{(n)}_1$.

If the $n$-coalescent is used as a model for an ancestral tree of a sample of individuals/genes, the functionals $\teen$ and $\sigmen$ can be interpreted biologically. The length of an external branch measures the uniqueness of the individual linked to that branch compared to the sample, since it gives the time this individual has to evolve by mutations that do not affect the rest of the sample (see the introduction of \cite{caliebe2007length} for more information). It was first introduced by Fu and Li in \cite{Fu1993}, where they compare mutations on external and internal branches of Kingman's $n$-coalescent in order to test for the neutrality of mutations.

The functional $\sigma^{(n)}$ was first introduced in \cite{caliebe2007length}, though $n-\sigma^{(n)}$ was also analyzed in \cite{MR2156553} as the level of coalescence of the chosen individual with the rest of the sample. In both articles, the functionals were defined for Kingman's $n$-coalescent.

For the biological interpretation of $\sigmen$, we see the $n$-coalescent as an ancestral tree of a sample of size $n$. Each collision in the $n$-coalescent then resembles the emergence of an
ancestor of the sample. $\sigma^{(n)}-1$ is the number of ancestors
of the sample which emerge before the most recent ancestor of the randomly
chosen individual/gene emerges (recall that time runs backwards in the $n$-coalescent). In this line of thought, $\sigma^{(n)}$ gives the temporal
position of the first ancestor of the chosen individual/gene among all
ancestors of the sample, which are the $\tau^{(n)}$ collisions in the $n$-coalescent. Thus, $\frac{\sigma^{(n)}}{\tau^{(n)}}$ gives the relative temporal position of the first ancestor of the chosen individual/gene among 
all ancestors of the sample (until the most recent common ancestor). In this sense, we interpret $\frac{\sigma^{(n)}}{\tau^{(n)}}$ as a measure of how ancient the chosen
individual/gene is compared to the rest of the sample.

In this article, we focus on the asymptotics of $T^{(n)}$, $\sigma^{(n)}$ and $R^{(n)}$ for $n\to\infty$.   
The asymptotics of these functionals are already known for some $\Lambda$-$n$-coalescents. For $T^{(n)}$, we have
\begin{itemize}
\item $\Lambda=\delta_0$ (Kingman's coalescent): $nT^{(n)}\stackrel{d}{\to} T$, where $T$ has density $t\mapsto \frac{8}{(2+t)^3}$ (see \cite{MR2156553}, \cite{caliebe2007length}, \cite{Janson11theexternal}),
\item $\Lambda=Beta(1,1)$ (Bolthausen-Sznitman coalescent): $\log(n)T^{(n)}\stackrel{d}{\to} Exp(1)$ (see \cite{MR2554368}),
\item $\Lambda$ with $\mu_{-1}=\int_{0}^1 x^{-1}\Lambda(dx)<\infty$: $T^{(n)}\stackrel{d}{\to}Exp(\mu_{-1})$ (see \cite{MR2684740}, see also \cite{MR2484170})
\end{itemize}
for $n\to\infty$. For $\sigma^{(n)}$, we have
\begin{itemize}
\item $\Lambda=\delta_0$: $\sigmen/n\stackrel{d}{\to} Beta(1,2)$ (see \cite{caliebe2007length}),
\item $\Lambda=Beta(1,1)$: $\frac{\log(n)}{n}\sigmen\stackrel{d}{\to} Beta(1,1)$ (see \cite{MR2554368}),
\item $\Lambda$ with $\mu_{-2}=\int_{0}^1 x^{-2}\Lambda(dx)<\infty$: $\sigmen\stackrel{d}{\to}Geo(\frac{\mu_{-1}}{\mu_{-2}})$ (see \cite{MR2484170}) 
\end{itemize}
for $n\to\infty$, where $Geo(p)$ is the geometric distribution on $\mathbb{N}$ with parameter $p$.

{There are some known results for the asymptotics of the block counting process $R^{(n)}$. Define $R=(R_t)_{t\geq 0}$ by setting $R_t:=|\Pi_t|$, the number of blocks of a $\Lambda$-coalescent $\Pi=(\Pi_t)_{t\geq 0}$. Note that $R=\lim_{n\to\infty}R^{(n)}$, if the block counting process $R^{(n)}$ is defined for the restriction of $\Pi$ to $\{1,\ldots,n\}$ for each $n\in\mathbb{N}$. For the small-time behaviour of the block counting process $R$, the following results are known:} 
\begin{itemize}
\item If $\Lambda(dx)=f(x)dx$, where $f(x)\sim Ax^{1-\alpha}$ for $1<\alpha<2$ and $\sim$ means that the ratio of the two sides tends to one as $x\rightarrow 0+$. Then 
\begin{equation}
\label{eq:R}
\lim_{t\rightarrow 0+}t^{1/(\alpha-1)}R_t=(\frac{\alpha}{A\Gamma(2-\alpha)})^{1/(\alpha-1)}\quad a.s.
\end{equation}
(see \cite{MR2349577} and \cite{berestycki-2008-44})
\item If $\int_{[\epsilon,1]}x^{-2}{\Lambda}(dx)=\epsilon^{-\alpha}L_\epsilon$ with $\epsilon \in(0,1)$ and $L_\epsilon$ slowly varying as $\epsilon\rightarrow 0$. Denote by $g_\epsilon:=L_\epsilon^{-1}\epsilon^{\alpha-1}$, then
$$\lim_{\epsilon\rightarrow 0+}\epsilon R_{g_\epsilon}=(\Gamma(2-\alpha))^{1/(1-\alpha)},\quad \text{in Probability}.$$
(see \cite{MR2247827}).

Notice that both cases are complementary. The block counting process of Kingman coalescent gets similar limit value almost surely by taking $\alpha=2$, (see \cite{MR2349577}, page $216$).
\item For any $\Lambda$ coalescent that comes down from infinity(that means for any $t>0$, $R_t<\infty$ almost surely), there exists a deterministic positive function $v_t$ on $(0,\infty)$, such that:
$$\lim_{t\rightarrow 0+}\frac{R_t}{v_t}=1,\quad a.s.$$
For explicit form of $v$ and finer results, we refer to \cite{berestycki-2008}. This result is much more general than the two former ones.
\end{itemize}

We will analyze the asymptotics for $T^{(n)}$, $\sigma^{(n)}$ and $R^{(n)}$ for $\Lambda$-$n$-coalescents with $\Lambda$ fulfilling
$$\rho(t)=C_0t^{-\alpha}  +O(t^{-\alpha+\zeta}),\ t\rightarrow 0$$
for  some $C_0>0$, {$\alpha\in(1,2)$} and $\zeta>1-1/\alpha$, where $\rho(t)=\int^1_t x^{-2}\Lambda(dx)$. Note that this class of $n$-coalescents includes all $Beta(a,b)$-$n$-coalescents with parameters $a \in (0,1)$ and $b>0$.  In this class of $n$-coalescents, we have the following asymptotics for $T^{(n)}$, $\sigma^{(n)}$ and $R^{(n)}$:
\begin{itemize}
\item $\frac{\sigmen}{n(\alpha-1)}\xrightarrow{d}\sigma$,
\item $n^{\alpha-1}\teen\overset{d}{\to}\inv{C_0\Gamma(2-\alpha)}((1-\sigma)^{1-\alpha}-1)$,
\item for any $t_{0}>0,\varepsilon>0$, $\displaystyle \displaystyle \P(\sup_{0\leq t \leq t_{0}}|n^{-1}\eren_{tn^{1-\alpha}}-(1+C_0\Gamma(2-\alpha)t)^{-1/(\alpha-1)}|>\varepsilon)\to0$,
\end{itemize}
for $n\to\infty$ , where $\sigma\overset{d}= Beta(1,\alpha)$. 

Remark that if $\Lambda(dx)=f(x)dx$, where $f(x)\sim Ax^{1-\alpha}$ for $1<\alpha<2$, we get that for fixed $t>0$, 
{
\[
\lim_{n\to \infty }\frac{1}{n}(1+\frac{A\Gamma(2-\alpha)}{\alpha}t)^{1/(\alpha-1)}\eren_{tn^{1-\alpha}}=1
\]
in probability, which is to be compared to the following a.s. convergence implied by (\ref{eq:R}):
\[
\lim_{n\to \infty }\frac{1}{n}(\frac{A\Gamma(2-\alpha)}{\alpha}t)^{1/(\alpha-1)}R_{tn^{1-\alpha}}=1.
\]}
{The different scaling comes from the fact that we do not consider the block counting process $R=\lim_{n\to\infty} R^{(n)}$ of a $\Lambda$-coalescent $\Pi$, but instead the time-changed, scaled limit $\lim_{n\to\infty}\frac{1}{n}\eren_{tn^{1-\alpha}}$. To our best knowledge, there does not seem to be a direct link between these two results.\\}
Note that if we see the Bolthausen-Sznitman $n$-coalescent as a $Beta(1,1)$-coalescent and Kingman's $n$-coalescent as the borderline case of a Beta distribution with parameter ${\alpha}\to2$, the convergence results for $\sigmen$ shows a nice continuity in the parameters of the limit distributions in the range of $Beta(2-\alpha,\alpha)$-$n$-coalescents with $\alpha\in[1,2]$. Our convergence result itself is even somewhat true in the border cases 1 and 2 (if one wages $(\alpha-1)^{-1}\to\infty$ for $\alpha\to 1$ against $\log(n)\to\infty$ for $n\to\infty$ for the Bolthausen-Sznitman $n$-coalescents). Also note that $T$ obtained as the limit variable of $nT^{(n)}$ in Kingman's case has the same law as $2((1-\sigma)^{-1}-1)$, which  gives again a nice continuity in results. The continuity also appears for the block counting process: replacing ${\alpha}$ by 2 in the formula for the $Beta(2-\alpha,\alpha)$-$n$-coalescent gives the formula for Kingman's, this result will be proved in the sequel.

Finally we can remark that together with the known asymptotics $\frac{\ton}{n}\stackrel{d}{\to} \alpha-1$ for this class of $n$-coalescents (see \cite{DDS2008}, \cite{MR2365877} and \cite{iksanov2008number}), we have $\sigmen/\ton\stackrel{d}{\to}\sigma$.     

To prove these results, we will exploit some techniques from \cite{DDS2008}. In \cite{DDS2008}, they were used to analyze the asymptotics of a part of the length of a $n$-coalescent and the number of collisions in a $n$-coalescent for the same class of $n$-coalescents as analyzed in the present paper. For the convergence result for $\teen$, we present two proofs. One mimics the approach in \cite{caliebe2007length} and \cite{Janson11theexternal}, using the convergence result for $\sigmen$ and $\teen=\sum^{\sigmen}_{i=1}T_i$, where $T_i$ is the waiting time between the $(i-1)$th and $i$th collision/jump of the $n$-coalescent. The other proof is based on the representation of $T^{(n)}$ as the first jump time of a Cox process driven by a random rate process which depends only on the block counting process associated with the remaining individuals labelled $\{2,3,\ldots,n\}$. We use a recursive construction suitable for any $\Lambda$-$n$-coalescent: This construction consists in adding individual $i$ to a coalescent process constructed by individuals from $1$ to $n$ except $i$ such that consistence relationship is fulfilled.

 \subsection{Organization of the paper} In section $2$, we recall some known technical results which can all be found in \cite{DDS2008}. In section $3$, we obtain the asymptotic result about $\sigma^{(n)}$ and also about the ratio between $\sigma^{(n)}$ and $\tau^{(n)}$. Section $4$ studies the small time behavior of the block counting process $R^{(n)}$. Depending on the property of $R^{(n)}$, our first method taking $T^{(n)}$ as the first jump time of a Cox process gives the asymptotic behavior of $T^{(n)}$ in section $5$. In section $6$, another method is provided by taking into account the fact that $T^{(n)}$ is the sum of $\sigma^{(n)}$ initial waiting times for the coalescent process $\Pi^{(n)}$ to jump from one state to the following.

\section{Preliminaries}
In this Section, we recall some results from \cite{DDS2008}.

Consider a $n$-coalescent with multiple collisions characterized by a finite measure $\Lambda$ on $[0,1]$.
Let $\nu(dx)=x^{-2}\Lambda(dx)$ and $\rho(t)=\nu[t,1]$.
When the process has $k$ blocks, the next coalescence event comes at rate $g_k$ given by

\begin{equation}
   \label{eq:rgk}
g_k=\sum_{\ell=1}^{k -1} \binom{k }{\ell+1}
\lambda_{k ,\ell+1}=\int_{(0,1)} \Big(1- 
(1-x)^k  -k  x(1-x)^{k -1} \Big) \frac{\Lambda(dx)}{x^2}.
\end{equation}

For $n\geq 1$, $x\in (0,1)$, let $B_{n,x}$ be a binomial r.v. with
parameter $(n,x)$. Recall that for $1\leq k\leq n$, we have
\begin{equation}
   \label{eq:rpBnx}
\P(B_{n,x}\geq  k)=\frac{n!}{(k-1)!(n-k)!} \int_0^x t^{k-1} (1-t)^{n-k} \; dt.
\end{equation}
Use the first equality in \reff{eq:rgk} and
\reff{eq:rpBnx} to get 
\begin{align*}
g_n
\nonumber
&=\int_0^1 \sum_{k=2}^n \binom{n}{k} x^k (1-x)^{n-k} \nu(dx)\\
\nonumber
&=\int_0^1 \P(B_{n,x} \geq 2) \nu(dx)\\
&=n(n-1) \int_0^1 (1-t)^{n-2} t
\rho(t) \; dt. 
\end{align*}

All along this paper, the following hypothesis will be assumed
 \begin{equation}\label{eq:hyp}
 \rho(t)=C_0t^{-\alpha}  +O(t^{-\alpha+\zeta})
 \end{equation}
  for  some 
  $C_0>0$, $\alpha\in(1,2)$ and $\zeta>1-1/\alpha$.
  Under this hypothesis, Lemma 2.2 of \cite{DDS2008} gives us that, for $n\geq2$,

\begin{equation}\label{eq:rdlg}
 g_n = C_0 \Gamma(2-\alpha) n^\alpha +  O(n^{\alpha-\min(\zeta,1)}).
\end{equation}

Recall that we call $\ton$ the number of coalescence events until reaching the common ancestor of the initial population (of size $n$).
For $k\geq0$, denote by $\igreken_k$ the number of blocks remaining after $k$ jumps. Notice that $\igreken$ is a decreasing Markov chain with
$\igreken_0=n$ and 
$\igreken_k=1$ for $k\geq\ton$.
Let $\ixen_k= \igreken_{k-1}-\igreken_k $ be the number of blocks we lose during the $k$th coalescence event.
We write $\ixen_0=0$.

The Markov property makes that the law of the first jump $\ixen_1$ will be of much interest. {We will look at some properties of $\ixen_1$.}
Notice that 
\begin{equation}
\label{eq:lawX1}
\P(X^{(n)}_1=k)=\inv{g_n} \int_0^1
\P(B_{n,x}=k+1) \nu(dx)
\end{equation} 
and thus
\begin{equation}
   \label{eq:rbfxn}
\P(X^{(n)}_1\geq k)=\frac{\int_0^1 \P(B_{n,x} \geq k+1)
  \nu(dx)}{g_n}=\frac{(n-2)!}{k!(n-k-1)!} \frac{\int_0^1
(1-t)^{n-k-1} t^k \rho(t) \; dt} {\int_0^1
(1-t)^{n-2} t \rho(t) \; dt}  .
\end{equation}

Under the same assumptions on $\rho(t)$, setting $\varepsilon_0>0$ and

\begin{equation}
   \label{eq:rfn}
\varphi_n=\begin{cases}
{n^{-\zeta} } &\quad \text{if}\quad \zeta<\alpha-1,\\
 {n^{1-\alpha+\varepsilon_0}} &\quad \text{if}\quad \zeta=\alpha-1,\\
 n^{1-\alpha} &\quad \text{if}\quad \zeta>\alpha-1,
\end{cases}
\end{equation}
Lemma 2.3 of \cite{DDS2008} tells us there exists a
  constant $C_{\ref{eq:rM1}}$ s.t. for all $n\geq 2$, we have
\begin{equation}
   \label{eq:rM1}
\val{\E[X^{(n)}_1] - \inv{{\alpha-1}} } \leq
C_{\ref{eq:rM1}}\varphi_n.
\end{equation}
\\
Moreover, from Lemma 2.4 of \cite{DDS2008}, there exists a
  constant $C_{\ref{eq:rM2}}$ s.t. for all $n\geq 2$, we have

\begin{equation}
   \label{eq:rM2}
\E\left[\left(X^{(n)}_1\right)^2\right] \leq  C_{\ref{eq:rM2}} \frac{n^2}{g_n}.
\end{equation}

We consider $\phi_n$ the Laplace transform of $X_1^{(n)}$: for $u\geq 0$, 
$   \phi_n(u)=\E[\expp{-u X^{(n)}_1}]$.
   Assume  that hypothesis (\ref{eq:hyp}) holds true. Let $\varepsilon_0>0$. Recall $\varphi_n$ given
  by \reff{eq:rfn}. 
Then we have (see \cite{DDS2008}, Lemma 2.5)
, for $n\geq 2$, 
\begin{equation}
   \label{eq:rphin}
\phi_n(u)=1 -\frac{u}{{\alpha-1}} + \frac{u^\alpha}{{\alpha-1}} + R(n,u) ,
\end{equation}
where $\displaystyle R(n,u)= \left(u
  \varphi_n + u^2 \right)h(n,u)$ with $\sup_{u\in [0,K], n\geq 2}
|h(n,u)|<\infty $ for all $K>0$.  

Moreover, if we assume that $\zeta>1-1/\alpha$ and set ${\eta}\geq \inv{\alpha}$, then (from \cite{DDS2008}, Lemma 3.2) there exist ${\varepsilon_1}>0$ and
   $C_{\ref{eq:rupperboundR}}(K)$ a finite constant  such that for all
    $n\geq 1$ and $u\in [0,K]$, a.s. with $a_n=n^{-\eta}$, 
\begin{equation}\label{eq:rupperboundR}
\sum_{i=1}^{\tau_n } \left|R(Y_{i-1}^{(n)}, ua_n)\right|\leq
C_{\ref{eq:rupperboundR}}(K)n^{-\varepsilon_1}. 
\end{equation}

We will also use the following result :
Let $V=(V_t,t\geq 0)$ be a $\alpha$-stable L\'evy process with no positive
jumps   (see  chap.    VII   in  \cite{MR1406564})   with  Laplace   exponent
$\psi(u)=u^\alpha/(\alpha-1)$: for all $u\geq 0$, $\E[\expp{-uV_t}]=\expp{t
  u^\alpha/(\alpha-1)}$. 
We assume that $\rho(t)=C_0t^{-\alpha}  +O(t^{-\alpha+\zeta})$ for  some 
  $C_0>0$ and $\zeta>1-1/\alpha$. {Recall that $\ton$ is the number of coalescing events in the $n$-coalescent until reaching its absorbing state.} 
 Let 
$$
V^{(n)}_t=n^{-1/{\alpha}}\sum_{k=1}^{\lfloor nt\rfloor\wedge\ton}(X^{(n)}_k-\inv{\alpha-1})
$$
for $t\in [0,{\alpha-1})$, and 
$$V^{(n)}_{\alpha-1}=n^{-1/{\alpha}}\sum_{k=1}^{\ton}(X^{(n)}_k-\inv{\alpha-1})= n^{-1/\alpha}
\left(n-1-\frac{\ton}{\alpha-1} \right).$$
Then, 
\begin{equation}\label{eq:fdvt}
(V^{(n)}_t, t\in
[0,\alpha-1])\to(V_t, t\in
[0,\alpha-1])
\end{equation}
in the sense of convergence in law of the finite-dimensional marginals (see \cite{DDS2008}, Corollary 3.5,
see also \cite{MR2365877,iksanov2008number}).

\section{The number of collisions in an external branch}
\label{sec:sig}

Consider a $n$-coalescent with multiple collisions characterized by a finite measure $\Lambda$ on $[0,1]$.
Recall that  $\nu(dx)=x^{-2}\Lambda(dx)$ and $\rho(t)=\nu[t,1]$ which is assumed to satisfy (\ref{eq:hyp}).
{A $n$-}coalescent takes its values in $\cp^{(n)}${, the set of partitions of $\left\{1,\ldots,n\right\}$}.
For $i\geq0$, let $\pi_i=\pi^{(n)}_i$ be the state of the process after the $i$th coalescence event.

Pick at random an individual from the initial population and denote by $\teen$ the length of the external branch starting from it. 
Because of exchangeability, $\teen$ has the same law as the length $\teen_1$ of the external branch starting from the initial individual labelled by $\{1\}$.
{A quantity of interest will be $\sigmen$, the number of coalescence events we have to wait to see the randomly chosen external branch merging. Again because of exchangeability, $\sigmen$ has the same law as
$$
\sigmen_1=\inf\{i>0,\{1\}\notin\pi_i\},
$$
the time we have to wait to see the external branch linked to individual 1 merging. We can write
\begin{equation}\label{eq:defteen}
\teen_{1}=\sum_{i=1}^{\sigmen_1}\frac{e_i}{g_{\igreken_{i-1}}}
\end{equation}
where the $e_i$'s are i.i.d. exponential random variables with mean 1. Note that the formula also holds true for $\teen$ and $\sigmen$ (just omit the subscripts). For the remainder of the chapter, we will identify $\sigmen$ with $\sigmen_1$.}

In this section, we will determinate the asymptotic law of $\sigmen$ for a class of coalescents containing the $Beta$-coalescent with $\alpha\in(1,2)$.

\begin{theo}\label{th:cvsig}
We assume  that $\rho(t)=C_0t^{-\alpha}  +O(t^{-\alpha+\zeta})$ for  some 
  $C_0>0$, {$\alpha\in(1,2)$} and $\zeta>1-1/\alpha$.
Then
\begin{equation}
\frac{\sigmen}{n(\alpha-1)}
\overset{d}{\to}
\sigma,
\end{equation}
for $n\to\infty$, 
where $\sigma\stackrel{d}{=} Beta(1,\alpha)$.
\end{theo}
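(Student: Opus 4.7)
Fix $t\in(0,1)$ and put $k_n=\lfloor tn(\alpha-1)\rfloor$. The plan is to show $\P(\sigmen>k_n)\to(1-t)^{\alpha}$, which is $\P(\sigma>t)$ for $\sigma\stackrel{d}{=}Beta(1,\alpha)$. The first step is to derive an explicit conditional formula. By the exchangeability of $\Pi^{(n)}$, at the $i$-th jump the merging set of $X^{(n)}_i+1$ blocks is uniform among all $(X^{(n)}_i+1)$-subsets of the $Y^{(n)}_{i-1}$ current blocks, so conditional on $\{\sigmen>i-1\}$ and on $(Y^{(n)}_j)_{j\le i}$, the singleton $\{1\}$ is merged at step $i$ with probability $(X^{(n)}_i+1)/Y^{(n)}_{i-1}$. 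Multiplying and using $Y^{(n)}_i=Y^{(n)}_{i-1}-X^{(n)}_i$ gives
\[
\P\bigl(\sigmen>k\,\bigm|\,Y^{(n)}_0,\dots,Y^{(n)}_k\bigr)=\prod_{i=1}^{k}\frac{Y^{(n)}_i-1}{Y^{(n)}_{i-1}}=\frac{Y^{(n)}_k-1}{n}\prod_{i=1}^{k-1}\Bigl(1-\frac{1}{Y^{(n)}_i}\Bigr),
\]
the last equality being a telescoping rearrangement.

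\textbf{Limit computation.} From the f.d.d.\ convergence \reff{eq:fdvt} of $V^{(n)}$ to an $\alpha$-stable L\'evy process, one deduces that, for each $s\in[0,\alpha-1)$, $Y^{(n)}_{\lfloor ns\rfloor}/n\to 1-s/(\alpha-1)$ in probability; since both sides are non-increasing in $s$, this upgrades to uniform convergence on compact subsets of $[0,\alpha-1)$. Setting $s=t(\alpha-1)$ this already yields $(Y^{(n)}_{k_n}-1)/n\to 1-t$ in probability. For the product, taking logarithms and using $\log(1-x)=-x+O(x^2)$,
\[
\sum_{i=1}^{k_n-1}\log\Bigl(1-\tfrac{1}{Y^{(n)}_i}\Bigr)=-\sum_{i=1}^{k_n-1}\frac{1}{Y^{(n)}_i}+O\!\Bigl(\sum_{i=1}^{k_n-1}\tfrac{1}{(Y^{(n)}_i)^2}\Bigr),
\]
where the error term is $O(1/n)$ because $Y^{(n)}_i\ge c n$ on the range considered (with high probability). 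The main sum is a Riemann-type sum: using the uniform approximation $Y^{(n)}_i/n\approx 1-i/(n(\alpha-1))$, it converges to $(\alpha-1)\int_0^t du/(1-u)=-(\alpha-1)\log(1-t)$. Hence the product tends to $(1-t)^{\alpha-1}$ in probability, and multiplying by the factor $(1-t)$ from $Y^{(n)}_{k_n}/n$ gives $(1-t)^{\alpha}$. Dominated convergence (the conditional probability is bounded by $1$) then yields $\P(\sigmen>k_n)\to(1-t)^{\alpha}$.

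\textbf{Main obstacle.} The delicate point is not the pointwise convergence $Y^{(n)}_{\lfloor ns\rfloor}/n\to 1-s/(\alpha-1)$, which follows from \reff{eq:fdvt}, but controlling the sum $\sum_{i=1}^{k_n}1/Y^{(n)}_i$ uniformly despite the heavy-tailed increments $X^{(n)}_i$ (which have infinite variance when $\alpha<2$). The key observation is that the fluctuations of $Y^{(n)}_{\lfloor ns\rfloor}$ around its deterministic linear approximation are of order $n^{1/\alpha}$ by \reff{eq:fdvt}, which is $o(n)$ for $\alpha>1$, so the relative error in each term $1/Y^{(n)}_i$ is $O(n^{1/\alpha-1})$ uniformly over $i\le k_n$. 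Summed over $n$ terms of size $1/n$, this produces an $O(n^{1/\alpha-1})\to0$ error in the Riemann sum, as required. The monotonicity of $Y^{(n)}$ is what makes this uniform control accessible without a full functional limit theorem in Skorokhod space.
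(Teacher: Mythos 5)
Your proof is correct, and it takes a genuinely different route at the key technical step. You share the paper's starting point -- the conditional survival probability $\P(\sigmen>k\mid\cy)=\prod_{i=1}^{k}\frac{\igreken_{i-1}-(\ixen_i+1)}{\igreken_{i-1}}$ obtained from exchangeability -- but where the paper expands $\log\bigl(1-\frac{\ixen_i+1}{\igreken_{i-1}}\bigr)$ directly and must then fight the heavy-tailed increments $\ixen_i$ on two fronts (a fluctuation term $J^{(2)}_{nt}=-\sum(\ixen_i-\frac{1}{\alpha-1})/\igreken_{i-1}$ handled via a stable-limit martingale argument in Lemma \ref{lem:calcul2}, and the second-order terms of the logarithm handled via a binomial moment estimate in Lemma \ref{lem:calcul3}), your telescoping identity $\prod_{i=1}^{k}\frac{\igreken_i-1}{\igreken_{i-1}}=\frac{\igreken_k-1}{n}\prod_{i=1}^{k-1}\bigl(1-\frac{1}{\igreken_i}\bigr)$ removes the $\ixen_i$ from the product entirely except through the single endpoint $\igreken_{k_n}$; each remaining factor is $1-O(1/n)$ with high probability, so the logarithm expansion is trivial and both of the paper's hard lemmas are bypassed. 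The two ingredients you still need -- $\igreken_{\lfloor ns\rfloor}/n\to 1-s/(\alpha-1)$ uniformly (which you correctly obtain from the f.d.d.\ convergence \reff{eq:fdvt} plus monotonicity, exactly as the paper does at \reff{restlimit}) and the Riemann-sum limit of $\sum 1/\igreken_i$ (the content of Lemma \ref{lem:calcul1} with $\eta=1$) -- are shared with the paper's argument. One small caveat: in your final paragraph the claim that the relative error in each $1/\igreken_i$ is $O(n^{1/\alpha-1})$ \emph{uniformly} in $i$ does not follow from \reff{eq:fdvt} alone, since that is only finite-dimensional convergence; but this rate is not needed, because the uniform $o(1)$ control you already extracted from monotonicity suffices for the Riemann sum, so this is an overstatement rather than a gap.
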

Recall that in this class of $n$-coalescents, we also have $\tau^{(n)}/n\stackrel{d}{\to}\alpha-1$ (from \cite{DDS2008}, see also \cite{MR2365877} and \cite{iksanov2008number}) for $n\to\infty$. Slutsky's theorem gives a convergence result for $\sigma^{(n)}/\tau^{(n)}$, which measures how ancient the chosen individual is compared to the rest of the sample 
\begin{corollary}
We assume  that $\rho(t)=C_0t^{-\alpha}  +O(t^{-\alpha+\zeta})$ for  some 
  $C_0>0$, {$\alpha\in(1,2)$} and $\zeta>1-1/\alpha$. Then
 $$\frac{\sigmen}{\ton}\stackrel{d}{\to}\sigma,$$
for $n\to\infty$, where $\sigma\stackrel{d}{=} Beta(1,\alpha)$.
\end{corollary}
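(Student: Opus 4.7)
The starting point is a conditioning/exchangeability argument. Given the full sequence of jump sizes $(X^{(n)}_i)_{i\geq 1}$ (equivalently the block‑counting process $(Y^{(n)}_i)_{i\geq 0}$), at the $i$th collision there are $Y^{(n)}_{i-1}$ blocks present and $X^{(n)}_i+1$ of them are chosen uniformly at random to merge. Hence the conditional probability that the block $\{1\}$ is \emph{not} among the merging blocks equals $(Y^{(n)}_{i-1}-X^{(n)}_i-1)/Y^{(n)}_{i-1}=(Y^{(n)}_i-1)/Y^{(n)}_{i-1}$. Iterating, I would obtain for every $k\geq 1$
\begin{equation*}
\P\bigl(\sigma^{(n)}>k\,\bigm|\,(Y^{(n)}_i)_{i\geq 0}\bigr)=\prod_{i=1}^{k}\frac{Y^{(n)}_i-1}{Y^{(n)}_{i-1}}=\frac{Y^{(n)}_k}{n}\prod_{i=1}^{k}\Bigl(1-\frac{1}{Y^{(n)}_i}\Bigr),
\end{equation*}
where the telescoping gives the last identity.

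Next I would let $k=\lfloor ns\rfloor$ for $s\in[0,\alpha-1)$ and pass to the limit using the results recalled in Section~2. The convergence \eqref{eq:fdvt} of $V^{(n)}$ implies the law of large numbers $Y^{(n)}_{\lfloor ns\rfloor}/n\to 1-s/(\alpha-1)$ in probability, which directly handles the prefactor $Y^{(n)}_k/n$. For the product, I would take logarithms, use $\log(1-1/Y^{(n)}_i)=-1/Y^{(n)}_i+O(1/(Y^{(n)}_i)^2)$, and recognize a Riemann sum:
\begin{equation*}
\sum_{i=1}^{\lfloor ns\rfloor}\frac{1}{Y^{(n)}_i}\;\longrightarrow\;\int_0^{s}\frac{du}{1-u/(\alpha-1)}=-(\alpha-1)\log\!\Bigl(1-\frac{s}{\alpha-1}\Bigr),
\end{equation*}
while the quadratic error is $O(1/n)$. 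This gives the product the limit $(1-s/(\alpha-1))^{\alpha-1}$, and combined with the prefactor yields $(1-s/(\alpha-1))^{\alpha}$ in probability. Since the left‑hand side is bounded by $1$, dominated convergence removes the conditioning, and taking $s=(\alpha-1)t$ gives
\begin{equation*}
\P\bigl(\sigma^{(n)}/(n(\alpha-1))>t\bigr)\longrightarrow(1-t)^{\alpha},\qquad t\in[0,1),
\end{equation*}
which is exactly the tail of a $\mathrm{Beta}(1,\alpha)$ distribution.

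The only delicate point I expect is upgrading the finite‑dimensional convergence of $V^{(n)}$ into enough uniform control on $(Y^{(n)}_i)_{1\le i\le \lfloor ns\rfloor}$ to justify the Riemann‑sum limit of $\sum 1/Y^{(n)}_i$. Because $Y^{(n)}_i$ is monotone nonincreasing in $i$, one can sandwich the sum between two Riemann sums built from finitely many marginals $Y^{(n)}_{\lfloor n s_j\rfloor}/n$ along a fine partition $0=s_0<s_1<\cdots<s_m=s$ of $[0,s]$. The convergence of those marginals by \eqref{eq:fdvt}, together with the fact that the limiting integrand $u\mapsto 1/(1-u/(\alpha-1))$ is continuous and bounded on $[0,s]$, then allows one to let the mesh go to $0$ and obtain the desired limit in probability, after which dominated convergence closes the argument.
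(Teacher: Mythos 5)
Your argument, as written, establishes the convergence of $\sigmen/(n(\alpha-1))$ to a $Beta(1,\alpha)$ law --- that is Theorem~\ref{th:cvsig} of the paper --- but the statement you were asked to prove concerns the ratio $\sigmen/\ton$, and the random denominator $\ton$ never appears in your proposal. The missing ingredient is the known asymptotics $\ton/n\stackrel{d}{\to}\alpha-1$ (a deterministic limit, hence convergence in probability; see \cite{DDS2008}, \cite{MR2365877}, \cite{iksanov2008number}), after which Slutsky's theorem gives
\[
\frac{\sigmen}{\ton}=\frac{\sigmen}{n(\alpha-1)}\cdot\frac{n(\alpha-1)}{\ton}\stackrel{d}{\to}\sigma .
\]
This is precisely how the paper deduces the Corollary from Theorem~\ref{th:cvsig}; without this step your conclusion does not match the target statement.

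That caveat aside, your derivation of the numerator asymptotics is correct, and it is in fact cleaner than the paper's proof of Theorem~\ref{th:cvsig}. You start from the same conditional survival probability $\prod_{i\leq k}\bigl(Y^{(n)}_{i-1}-X^{(n)}_i-1\bigr)/Y^{(n)}_{i-1}$, but your telescoping identity $\prod_{i\leq k}(Y^{(n)}_i-1)/Y^{(n)}_{i-1}=(Y^{(n)}_k/n)\prod_{i\leq k}(1-1/Y^{(n)}_i)$ absorbs all dependence on the jump sizes $X^{(n)}_i$ into the single factor $Y^{(n)}_k/n$, which converges by the law of large numbers \eqref{restlimit}. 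The remaining product involves only terms $1/Y^{(n)}_i=O(1/n)$ on the high-probability event $\{Y^{(n)}_{\lfloor ns\rfloor}\geq\beta n\}$, so its logarithm reduces to the Riemann sum $-\sum_i 1/Y^{(n)}_i$ plus an $O(1/n)$ error, and your monotone sandwich along a fine partition legitimately upgrades the finite-dimensional convergence \eqref{eq:fdvt} to the limit of that sum. This bypasses the paper's Lemma~\ref{lem:calcul2} (the stable-fluctuation term $J^{(2)}_{nt}$) and Lemma~\ref{lem:calcul3} (the second-order terms of the logarithm), both of which are needed only because the paper expands $\log\bigl(1-(X^{(n)}_i+1)/Y^{(n)}_{i-1}\bigr)$ directly. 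One minor point: the products should be truncated at $k\wedge\ton$, but since $\P(\ton>ns)\to1$ for $s<\alpha-1$ this costs nothing.
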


\begin{proof}[proof of Theorem \ref{th:cvsig}]
For convenience, we set 

\begin{enumerate}
\item $\binom{a}{b}=0$, if $0\leq a<b, a\in\mathbb{Z_{+}},b\in\mathbb{Z_{+}},\mathbb{Z}_{+}=\{0,1,2,...\}.$
\item $\log(0)=-\infty$.
\end{enumerate}

Notice that $\sigmen\leq\ton$. Let $\cy=(\cy_k,  k\geq 0)$  denotes the  filtration generated  by  $\igreken$. 
For any $t\geq0$, we have

\begin{align*}
\P(\sigmen>nt|\cy)
&=
\P(\sigmen>nt,\ton>nt|\cy)\\
&=
\prod_{i=1}^{\lfloor nt\rfloor\wedge\ton}\P(\{1\}\in\pi_{i}|\{1\}\in\pi_{i-1},\cy)\\
&=
\prod_{i=1}^{\lfloor nt\rfloor\wedge\ton}\frac{\binom{\igreken_{i-1}-1}{\ixen_{i}+1}}{\binom{\igreken_{i-1}}{\ixen_{i}+1}}\\
&=
\prod_{i=1}^{\lfloor nt\rfloor\wedge\ton}\frac{\igreken_{i-1}-(\ixen_{i}+1)}{\igreken_{i-1}}.\\
\end{align*}

Notice that $\frac{\ixen_{i}+1}{\igreken_{i-1}}< 1$ if $i\neq\ton$, and $\frac{\ixen_{i}+1}{\igreken_{i-1}}=1$ if $i=\ton$.

We can hence write

\begin{align*}
\log\left(\P(\sigmen>nt|\cy)\right)
&=
\sum_{i=1}^{\lfloor nt\rfloor\wedge\ton}\log\left(1-\frac{\ixen_{i}+1}{\igreken_{i-1}}\right)\\
\end{align*}
and proceed to a power series expansion :
$$\log\left(\P(\sigmen>nt|\cy)\right)
=I^{(1)}_{nt}+I^{(2)}_{nt},
$$
with
$$I^{(1)}_{nt}=-\sum_{i=1}^{\lfloor nt\rfloor\wedge\ton}\left(\ixen_{i}+1\right)\left(\igreken_{i-1}\right)^{-1}
,$$
and
$$I^{(2)}_{nt}=\sum_{i=1}^{\lfloor nt\rfloor\wedge\ton}(\log(1-\frac{\ixen_{i}+1}{\igreken_{i-1}})+\frac{\ixen_{i}+1}{\igreken_{i-1}}),
$$

where $I^{(2)}_{nt}$ can be $-\infty$ if $\frac{\ixen_{i}+1}{\igreken_{i-1}}=1$. Let us look further at $I^{(1)}_{nt}$. 
The idea is to replace $\ixen_i$ by the limit of its expectation.
$$
I^{(1)}_{nt}=J^{(1)}_{nt}+J^{(2)}_{nt},$$
with
$$J^{(1)}_{nt}=-\sum_{i=1}^{\lfloor nt\rfloor\wedge\ton}\left(\inv{\alpha-1}+1\right)\left(\igreken_{i-1}\right)^{-1},
$$
and
$$J^{(2)}_{nt}=
-\sum_{i=1}^{\lfloor nt\rfloor\wedge\ton}\left(\ixen_{i}-\inv{\alpha-1}\right)\left(\igreken_{i-1}\right)^{-1}.
$$
We will use three lemmas whose proofs are given in the rest of the Section.

Lemma \ref{lem:calcul1}, with $\eta=1$, tells us that, when $0<t<\alpha-1$
\begin{equation}\label{eq:part1}
J^{(1)}_{nt}
\stackrel{\P}
\rightarrow -\frac{\alpha}{\alpha-1}\int_0^t\left(1-\frac{x}{{\alpha-1}}\right)^{-1}dx
=\alpha\log\left(1-\frac{t}{\alpha-1}\right).
\end{equation} 

Lemma \ref{lem:calcul2} gives, for $0<t<\alpha-1$,
\begin{equation}\label{eq:part2}
J^{(2)}_{nt}=-n^{1/\alpha-1}
n^{1-1/\alpha}\sum_{i=1}^{\lfloor nt\rfloor\wedge{\tau^{(n)}}}\left(\ixen_{i}-\inv{\alpha-1}\right)\left(\igreken_{i-1}\right)^{-1}
\xrightarrow{\P}0.
\end{equation}

Finally, Lemma \ref{lem:calcul3} gives, for $t<\alpha-1$,
\begin{equation}\label{eq:part3}
I_{nt}^{(2)}\xrightarrow{\P}0.
\end{equation}

Adding \eqref{eq:part1}, \eqref{eq:part2} and \eqref{eq:part3}, we get that {for $t<\alpha-1$}
$$
\log\left(\P(\sigmen>nt|\cy)\right)
\stackrel{\P}{\rightarrow}
\alpha\log\left(1-\frac{t}{\alpha-1}\right),
$$
and thus
$$
\P(\sigmen>nt|\cy)
\stackrel{\P}{\rightarrow}
\left(1-\frac{t}{\alpha-1}\right)^\alpha.
$$

While we know that $\P(\sigmen>nt|\cy)\leq 1$,then
$$\mathbb{P}(\sigmen>nt)=\mathbb{E}[\mathbb{P}(\sigmen>nt|\cy)]\rightarrow\left(1-\frac{t}{\alpha-1}\right)^\alpha.$$
We thus obtain that, for $x\in(0,1)$,
$$
\P(\sigmen>n(\alpha-1)x)\to(1-x)^\alpha.
$$
and then that $\displaystyle \frac{\sigmen}{n(\alpha-1)}$ converges in distribution to a $Beta(1,\alpha)$ law.
\end{proof}
\begin{lem}\label{lem:calcul1}
We set $\nu_{\eta}(t)=\int_0^t\left(1-\frac{x}{{\alpha-1}}\right)^{-\eta}dx$,$\eta\in \mathbb{R}$. 
We assume  that $\rho(t)=C_0t^{-\alpha}  +O(t^{-\alpha+\zeta})$ for  some 
  $C_0>0$, {$\alpha\in(1,2)$} and $\zeta>1-1/\alpha$.
For any $0<t<\alpha-1$ and $\eta\in \mathbb{R}$, we have 

\begin{enumerate}
\item Let $t_{0}\in [0,\alpha-1)$ and $\delta>0$. The following convergence in probability holds when $n\to \infty $:
$$n^{(\alpha-1)/2-\delta}\sup_{0 \leq t\leq t_{0}}|n^{\eta-1}\sum_{i=1}^{\lfloor nt\rfloor \wedge\ton}\left(\igreken_{i-1}\right)^{-\eta}-\nu_{\eta}(t)|\xrightarrow{}0.$$ 
\item Let $t\in [0,\alpha-1)$. The following convergence in distribution holds when $n\to \infty $:
$$n^{\eta-1/\alpha}(\sum_{i=1}^{\lfloor nt\rfloor \wedge\ton}\left(\igreken_{i-1}\right)^{-\eta}-n^{1-\eta}\nu_{\eta}(t))\xrightarrow{} \eta\int_{0}^{t}dr(1-\frac{r}{\gamma})^{-\eta-1}V_{r}.$$
\end{enumerate}
\end{lem}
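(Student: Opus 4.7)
The plan is to linearise $(\igreken_{i-1})^{-\eta}$ around its deterministic counterpart and read off both statements from the same expansion. From $\igreken_{i-1} = n - \sum_{k=1}^{i-1} \ixen_k$ and the definition of $\veen$, for $i \le \ton+1$ one has the exact identity
\begin{equation*}
\igreken_{i-1} = n\Bigl(1 - \frac{u_i}{\alpha-1}\Bigr) - n^{1/\alpha}\veen_{u_i}, \qquad u_i:=\frac{i-1}{n},
\end{equation*}
so that the random perturbation of $\igreken_{i-1}/n$ around $1-u_i/(\alpha-1)$ is of order $n^{1/\alpha-1}=o(1)$. Setting $f(u)=(1-u/(\alpha-1))^{-\eta}$ for $u<\alpha-1$, I perform a first-order Taylor expansion on the event $\{\sup_{r\le t_0}|\veen_r|\le M\}\cap\{\ton\ge \lfloor nt_0\rfloor\}$, whose probability tends to one thanks to the tightness implicit in \reff{eq:fdvt} together with $\ton/n \to \alpha-1$. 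This yields
\begin{equation*}
n^{\eta}(\igreken_{i-1})^{-\eta} = f(u_i) + \eta\,(1-u_i/(\alpha-1))^{-\eta-1}\,n^{1/\alpha-1}\veen_{u_i} + R^{(n)}_i,
\end{equation*}
with $|R^{(n)}_i| \le C n^{2/\alpha-2}$ uniformly for $u_i\le t_0$. Summing and dividing by $n$ gives the master decomposition
\begin{equation*}
n^{\eta-1}\sum_{i=1}^{\lfloor nt\rfloor\wedge\ton}(\igreken_{i-1})^{-\eta} = A_n(t) + \eta\, n^{1/\alpha-1} B_n(t) + D_n(t),
\end{equation*}
where $A_n(t) = \tfrac{1}{n}\sum_{i=1}^{\lfloor nt\rfloor}f(u_i)$, $B_n(t) = \tfrac{1}{n}\sum_i (1-u_i/(\alpha-1))^{-\eta-1}\veen_{u_i}$, and $D_n(t)$ is $O_P(n^{2/\alpha-2})$ uniformly in $t\le t_0$.

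For part (1), the Riemann bound $\sup_{t\le t_0}|A_n(t)-\nu_\eta(t)|=O(1/n)$ holds because $f\in C^1([0,t_0])$, and $B_n=O_P(1)$ by the uniform bound on $\veen$. The dominant error on the right is therefore $n^{1/\alpha-1}$; since $(\alpha-1)/2 + 1/\alpha - 1 = (\alpha-1)(\alpha-2)/(2\alpha) < 0$ on $\alpha\in(1,2)$, the product $n^{(\alpha-1)/2-\delta}\cdot n^{1/\alpha-1}$ tends to $0$ for every $\delta>0$, giving (1).

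For part (2), multiplying the master identity by $n^{1-1/\alpha}$ and subtracting $n^{1-1/\alpha}\nu_\eta(t)$ produces
\begin{equation*}
n^{\eta-1/\alpha}\Bigl[\sum_i (\igreken_{i-1})^{-\eta} - n^{1-\eta}\nu_\eta(t)\Bigr] = n^{-1/\alpha}\bigl(nA_n(t)-n\nu_\eta(t)\bigr) + \eta B_n(t) + n^{1-1/\alpha}D_n(t).
\end{equation*}
The first term is $O(n^{-1/\alpha})$ and the third is $O_P(n^{1/\alpha-1})$, both vanishing. The middle term is a continuous Riemann-sum functional of $\veen|_{[0,t]}$ weighted by the bounded continuous kernel $g(r)=(1-r/(\alpha-1))^{-\eta-1}$, so \reff{eq:fdvt} combined with Fubini (or a Skorokhod-representation argument) yields
\begin{equation*}
B_n(t) \xrightarrow{d} \int_0^t (1-r/(\alpha-1))^{-\eta-1}\, V_r\,dr,
\end{equation*}
which, multiplied by $\eta$, is the claimed limit (the $\gamma$ in the statement being $\alpha-1$).

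The principal technical hurdle is upgrading the finite-dimensional convergence \reff{eq:fdvt} both to the uniform bound $\sup_{r\le t_0}|\veen_r|=O_P(1)$ and to the convergence of the integrated functional $B_n(t)$; both are standard consequences of the Skorokhod-tightness of partial sums in the $\alpha$-stable domain of attraction and are used implicitly in \cite{DDS2008}. The restriction $t_0<\alpha-1$ is essential to keep $f$ and its derivative bounded throughout the Taylor step; near that boundary the weight blows up and the linearisation would fail.
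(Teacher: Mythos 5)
Your overall strategy --- writing $\igreken_{i-1}=n\bigl(1-\frac{u_i}{\alpha-1}\bigr)-n^{1/\alpha}\veen_{u_i}$, Taylor-expanding $x\mapsto x^{-\eta}$ around the deterministic profile on a good event, and reading off part (1) from the zeroth-order term and part (2) from the linear fluctuation term --- is exactly the strategy of \cite{DDS2008}, Theorem 5.1, which is all the paper itself offers as a proof (it cites that theorem for $\eta=\alpha-1$ and declares the general $\eta$ analogous). Your exponent bookkeeping is correct: $(\alpha-1)/2+1/\alpha-1=(\alpha-1)(\alpha-2)/(2\alpha)<0$ on $(1,2)$, so the $n^{1/\alpha-1}B_n$ term is indeed absorbed by the prefactor in (1), and the terms $n^{-1/\alpha}(nA_n-n\nu_\eta)$ and $n^{1-1/\alpha}D_n$ vanish in (2).

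The genuine gap is the step you flag and then dismiss as standard. Both the uniform bound $\sup_{r\le t_0}|\veen_r|=O_P(1)$ and the convergence $B_n(t)\to\int_0^t g(r)V_r\,dr$ require strictly more than the finite-dimensional convergence \reff{eq:fdvt}, and they do \emph{not} follow from ``Skorokhod-tightness of partial sums in the $\alpha$-stable domain of attraction'': the increments $\ixen_k$ are neither i.i.d.\ nor identically distributed (their law depends on the current state $\igreken_{k-1}$), so the classical stable invariance principle does not apply, and no tightness of $\veen$ in $D[0,t_0]$ is recorded here or in \cite{DDS2008}. A Doob $L^2$ maximal inequality is also unavailable, since by \reff{eq:rM2} the conditional second moments give a variance of order $n^{3-\alpha-2/\alpha}$, which diverges for every $\alpha\in(1,2)$; so $\sup_{r\le t_0}|\veen_r|=O_P(1)$ is not free, and without it even your error estimate $|D_n|\le Cn^{2/\alpha-2}$ and the bound $B_n=O_P(1)$ in part (1) are unsupported. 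The standard repairs, which are the ones this paper actually uses elsewhere, are: (a) for the uniform control needed in the Taylor step, exploit the monotonicity of $\igreken$ as in the proofs of Lemmas \ref{lem:calcul2} and \ref{lem:calcul3} --- the event $\{\exists\, i\le nt:\igreken_{i-1}<n\beta\}$ is contained in a single-time event whose probability is controlled by \reff{eq:fdvt} alone; (b) for $B_n(t)$, perform Abel summation to rewrite it as $\sum_k \xi_k G(u_k)$ with $\xi_k=n^{-1/\alpha}(\ixen_k-\frac{1}{\alpha-1})$ and $G(u)=\int_u^t g(r)\,dr$, and then run the exponential-martingale argument of Lemma \ref{lem:calcul2} with the deterministic weights $G(u_k)$ in place of $Z_k^{(n)}$; Fubini identifies $\int_0^t G(r)\,dV_r$ with $\int_0^t g(r)V_r\,dr$. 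Until one of these devices (or a genuine functional limit theorem for $\veen$) is supplied, the proof is incomplete at its central analytic step.
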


\begin{proof}
The case ${\eta}={\alpha}-1$ is given by  Theorem $5.1$ in \cite{DDS2008}. Following the same arguments, it is easy to get the general result. 
\end{proof}

\begin{lem}\label{lem:calcul2}
For any $t<\alpha-1$, the following convergence in distribution holds :
$$n^{1-1/\alpha}\sum_{i=1}^{\lfloor nt\rfloor\wedge\ton}\left(\ixen_{i}-\inv{\alpha-1}\right)\left(\igreken_{i-1}\right)^{-1}
\xrightarrow{d}(v_\alpha(t))^{1/\alpha}V_1,$$
{where $(V_t)_{t\geq 0}$ is an $\alpha$-stable L\'evy process with no positive
jumps and  $v_\alpha(t)=\int_0^t\left(1-\frac{x}{{\alpha-1}}\right)^{-\alpha}dx$.}

\end{lem}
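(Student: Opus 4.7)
The key idea is to view the sum as a discrete stochastic integral of the deterministic function $h(s):=(1-s/(\alpha-1))^{-1}$ against the rescaled fluctuation process $\veen$ appearing before \eqref{eq:fdvt}, and then to invoke both the finite-dimensional convergence $\veen\to V$ and the $\alpha$-stability of $V$. The proof would proceed in three steps: (i) replace the random denominator $(\igreken_{i-1})^{-1}$ by the deterministic $n^{-1}h((i-1)/n)$ with negligible error; (ii) show that the resulting Riemann sum converges in distribution to $\int_0^t h(s)\,dV_s$; (iii) identify the law of this integral.

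\emph{Step 1.} From $\igreken_{i-1}=n-\sum_{j=1}^{i-1}\ixen_j$ and the very definition of $\veen$, one has
$$\igreken_{i-1}=n\Bigl(1-\frac{i-1}{n(\alpha-1)}\Bigr)-n^{1/\alpha}\,\veen_{(i-1)/n}.$$
For $t<\alpha-1$ the deterministic part is bounded below by $n(1-t/(\alpha-1))>0$ uniformly in $i\leq\lfloor nt\rfloor$, while $\veen$ is tight on $[0,t]$ by \eqref{eq:fdvt}. A first-order Taylor expansion then yields $(\igreken_{i-1})^{-1}=n^{-1}h((i-1)/n)+r^{(n)}_i$ with $\sup_{i\leq\lfloor nt\rfloor}|r^{(n)}_i|=O_P(n^{1/\alpha-2})$. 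Writing $S_n(t):=n^{1-1/\alpha}\sum_{i=1}^{\lfloor nt\rfloor\wedge\ton}(\ixen_i-1/(\alpha-1))(\igreken_{i-1})^{-1}$, the contribution of $r^{(n)}_i$ to $S_n(t)$ is handled by replacing $r^{(n)}_i$ by its linearization $n^{1/\alpha-2}h((i-1)/n)^2\veen_{(i-1)/n}$ and applying summation by parts, so that $n^{1-1/\alpha}\sum_i(\ixen_i-1/(\alpha-1))r^{(n)}_i$ is reduced to $n^{-1-1/\alpha}$ times a squared partial sum and to $n^{-1-1/\alpha}\sum(\ixen_i-1/(\alpha-1))^2$; both terms are $o_P(1)$ thanks to the second-moment bound \eqref{eq:rM2}.

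\emph{Steps 2 and 3.} It remains to prove
$$n^{-1/\alpha}\sum_{i=1}^{\lfloor nt\rfloor\wedge\ton}\Bigl(\ixen_i-\frac{1}{\alpha-1}\Bigr)h\!\left(\frac{i-1}{n}\right)\xrightarrow{d}\int_0^t h(s)\,dV_s.$$
One partitions $[0,t]$ into $L$ equal subintervals and replaces $h((i-1)/n)$ by $h(\ell t/L)$ on the $\ell$-th piece, turning the left-hand side, up to an error whose supremum over $n$ tends to zero as $L\to\infty$ (by the uniform continuity of $h$ on $[0,t]$ and the tightness of $\veen$), into the Riemann sum $\sum_{\ell=0}^{L-1}h(\ell t/L)(\veen_{(\ell+1)t/L}-\veen_{\ell t/L})$. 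By \eqref{eq:fdvt}, this sum converges in distribution, for each fixed $L$, to $\sum_{\ell=0}^{L-1}h(\ell t/L)(V_{(\ell+1)t/L}-V_{\ell t/L})$, and the usual diagonal argument ($n\to\infty$ first, then $L\to\infty$) identifies the limit as $\int_0^t h(s)\,dV_s$. Finally, since $V$ has independent $\alpha$-stable increments with Laplace exponent $u\mapsto u^\alpha/(\alpha-1)$ and $h\geq 0$ is deterministic, this integral is $\alpha$-stable with no positive jumps and Laplace exponent $u\mapsto \frac{u^\alpha}{\alpha-1}\int_0^t h(s)^\alpha\,ds=\frac{u^\alpha v_\alpha(t)}{\alpha-1}$; hence it coincides in law with $V_{v_\alpha(t)}$, equivalently with $v_\alpha(t)^{1/\alpha}V_1$.

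\emph{Main obstacle.} The delicate point is Step 1: the naive estimate $|\sum W_ir^{(n)}_i|\leq \sup_i|r^{(n)}_i|\cdot\sum_i|W_i|$ is too crude, because $\sum|\ixen_i-1/(\alpha-1)|$ is of order $n$, much larger than its signed counterpart $O_P(n^{1/\alpha})$. One must instead exploit the near-martingale structure of the centered increments via summation by parts, so that only the quadratic variation of the partial-sum process $M^{(n)}_k:=\sum_{j\leq k}(\ixen_j-1/(\alpha-1))$ enters the error; this requires combining the tightness of $\veen$ with the second-moment bound \eqref{eq:rM2}.
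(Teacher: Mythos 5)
Your overall strategy --- freeze the weights at their deterministic limit $n^{-1}h((i-1)/n)$ with $h(s)=(1-s/(\alpha-1))^{-1}$, and then pass a Riemann-sum approximation of $\sum_i h((i-1)/n)(\ixen_i-\frac1{\alpha-1})n^{-1/\alpha}$ to the stochastic integral $\int_0^t h\,dV_s$ --- is genuinely different from the paper's, which never decouples the weights from the increments: the paper truncates on $\{\igreken_{i-1}\ge n\beta\}$, keeps the random weights $Z_i^{(n)}=n(\igreken_{i-1})^{-1}$, and computes the Laplace transform of the weighted sum directly via the exponential martingale built from the conditional Laplace transforms $\phi_{\igreken_{i-1}}$ of \eqref{eq:rphin}, with the remainder controlled by \eqref{eq:rupperboundR}. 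Your identification of the limit law in Step 3 is correct, and your Step 1, while only sketched, can be closed along the lines you indicate (the summation-by-parts reduction to $n^{-1-1/\alpha}M_{\lfloor nt\rfloor}^2$ and $n^{-1-1/\alpha}\sum_i(\ixen_i-\frac1{\alpha-1})^2$ works: the first term is $o_P(1)$ by the one-point convergence in \eqref{eq:fdvt}, the second by a first-moment bound via \eqref{eq:rM2}, using $\alpha+1/\alpha>2$).

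The genuine gap is in Step 2. You assert that the error from replacing $h((i-1)/n)$ by its value $h(\ell t/L)$ on each block vanishes, uniformly in $n$, as $L\to\infty$, ``by the uniform continuity of $h$ and the tightness of $\veen$.'' Neither fact delivers this. First, \eqref{eq:fdvt} is only finite-dimensional convergence, not tightness in path space, so $\sup_{s\le t}|\veen_s|=O_P(1)$ is not available from what you cite; and even granting it, Abel summation of the block error only produces a bound of order $\sup_s|\veen_s|\cdot(\|h'\|_\infty t+L\,\omega_h(t/L))$, which is $O_P(1)$ but not small in $L$. The natural alternative, a Chebyshev bound on the martingale part of $n^{-1/\alpha}\sum_i\bigl(h((i-1)/n)-h(\ell t/L)\bigr)(\ixen_i-\frac1{\alpha-1})$, fails outright: by \eqref{eq:rM2} the conditional variances are of order $n^{2-\alpha}$, so the total variance is of order $\omega_h(t/L)^2\,n^{3-\alpha-2/\alpha}$, which \emph{diverges} for every $\alpha\in(1,2)$. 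This is the usual obstruction in the infinite-variance regime: second moments live on the wrong scale for an $\alpha$-stable limit, so no $L^2$ argument can show that a weighted centered sum with slightly perturbed weights is negligible. To close this step you would need either a truncation of the large jumps of $\ixen_i$ at level $n^{1/\alpha}$ (with separate control of the $O_P(1)$ big jumps), or a Laplace-transform computation of exactly the kind the paper performs, or a functional limit theorem for $\veen$ together with a Kurtz--Protter/UT-type continuity theorem for stochastic integrals --- none of which appears in your proposal. As written, the central convergence of the lemma is asserted rather than proved.
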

\begin{proof}
Let $\delta\in (0,\alpha-1)$, $t_0=\alpha-1-\delta$  and  $t\in
[0,t_0] $.

Let $\varepsilon\in (0,1-\frac{t}{\alpha-1})$ and $\beta=1-\frac{t}{\alpha-1}-\varepsilon>0$.
We have 
$$n^{1-1/\alpha}\sum_{i=1}^{\lfloor nt\rfloor\wedge\ton}\left(\ixen_{i}-\inv{\alpha-1}\right)\left(\igreken_{i-1}\right)^{-1}=A_{nt}+B_{nt},
$$
 with
$$A_{nt}=n^{1-1/\alpha}\sum_{i=1}^{\lfloor nt\rfloor\wedge\ton}\left(\ixen_{i}-\inv{\alpha-1}\right)\left(\igreken_{i-1}\right)^{-1}\ind_{\{\igreken_{i-1}\geq n\beta\}},$$
and
$$B_{nt}=n^{1-1/\alpha}\sum_{i=1}^{\lfloor nt\rfloor\wedge\ton}\left(\ixen_{i}-\inv{\alpha-1}\right)\left(\igreken_{i-1}\right)^{-1}\ind_{\{\igreken_{i-1}< n\beta\}}.$$
{We will show that $B_{nt}$ converges to 0 in probability and that $A_{nt}$ weakly converges to $(v_\alpha(t))^{1/\alpha}V_1$ as $n\to\infty$.}\\
\noindent
\textbf{Convergence of  $A_{nt}$}.
Let $Z_i^{(n)}=n\left(\igreken_{i-1}\right)^{-1}\ind_{\{\igreken_{i-1}\geq n\beta\}}$.
We have that $\sup_{n,i\geq1}Z_i^{(n)}\leq\beta^{-1}$ a.s..

By using (\ref{eq:rphin}), it is enough to prove that 
\[
{\mathbb E}[\exp(-uA_{nt})]\; \xrightarrow[n\rightarrow \infty
]{} \;   \expp { v_\alpha(t)   u^\alpha/(\alpha-1)},
\]
for any $u$ positive, where $\expp { v_\alpha(t)   u^\alpha/(\alpha-1)}$ is the Laplace transform of $(v_\alpha(t))^{1/\alpha}V_1$.

Taking $uZ_i^{(n)}$ as $Z_i^{(n)}$, we shall only consider the case $u=1$. 

Let us consider $n^{-1}\sum_{i=1}^{\lfloor nt\rfloor\wedge\ton}\left(Z_i^{(n)}\right)^{\alpha}=
n^{\alpha-1}\sum_{i=1}^{\lfloor nt\rfloor\wedge\ton}\left(\igreken_{i-1}\right)^{-\alpha}\ind_{\{\igreken_{i-1}\geq n\beta\}}.$
We have, because the process $(\igreken_{i},i\geq0)$ is decreasing, that
\begin{align*}
\P(n^{-1}\sum_{i=1}^{\lfloor nt\rfloor\wedge\ton}\left(Z_i^{(n)}\right)^{\alpha}\neq
n^{\alpha-1}\sum_{i=1}^{\lfloor nt\rfloor\wedge\ton}\left(\igreken_{i-1}\right)^{-\alpha})
&=\P(\exists i;\igreken_{i-1}< n\beta)\\
&\leq \P(\{\igreken_{(\lfloor nt\rfloor\wedge\ton)-1}< n\beta)\\
& \leq  \P(n^{-1} \sum_{j=1}^{(\lfloor nt\rfloor\wedge\ton)-1}(X^{(n)}_j- \frac{1}{{\alpha-1}}) \geq \varepsilon).
\end{align*}

Use \eqref{eq:fdvt} to
get that the right-hand side of the last inequality converges to $0$
as $n$ goes to infinity. 
Using also Lemma~\ref{lem:calcul1} with $\eta=\alpha$, we have that
$$n^{\alpha-1}\sum_{i=1}^{\lfloor nt\rfloor\wedge\ton}\left(\igreken_{i-1}\right)^{-\alpha}
\xrightarrow{\P}v_\alpha(t),$$
 as $n\to\infty$. We can thus deduce that
 \begin{equation}\label{eq:zalphacv}
 n^{-1}\sum_{i=1}^{\lfloor nt\rfloor\wedge\ton}\left(Z_i^{(n)}\right)^{\alpha}
 \xrightarrow{\P}v_\alpha(t),
 \end{equation}
as $n\to\infty$.

{For $a>0$}, we set 
$$
M^{{(a)}}_{n,k}
=
\exp\left(
\sum_{i=1}^{k}
\left(
-n^{-1/\alpha}{a}Z_i^{(n)}X^{(n)}_i
-\log\phi_{Y^{(n)}_{i-1}}(n^{-1/\alpha}{a}Z_i^{(n)})
\right)
\right).
$$
The process $(M^{{(a)}}_{n,k},k\geq 1)$ 
is a bounded martingale w.r.t. the filtration $\cy$. Notice that
$\E[M^{{(a)}}_{n,k}]=1$. As $X_{i}^{n}=0$ and $Z_{i}^{(n)}=0$ for $i>\tau^{(n)}$, we also have
$$
M^{{(a)}}_{n,k}
=
\exp\left(
\sum_{i=1}^{k\wedge\tau^{(n)}}
\left(
-n^{-1/\alpha}{a}Z_i^{(n)}X^{(n)}_i
-\log\phi_{Y^{(n)}_{i-1}}(n^{-1/\alpha}{a}Z_i^{(n)})
\right)
\right).
$$ 
Using {$R(n,u)$ defined in}
\reff{eq:rphin}, we get that : 
\begin{align*}
&M^{{(a)}}_{n,\lfloor nt\rfloor}\\
&=
\exp
{\left(
-\sum_{i=1}^{\lfloor nt\rfloor\wedge\ton}n^{-1/\alpha}{a}Z_i^{(n)}(X^{(n)}_i-\inv{\alpha-1})
-\sum_{i=1}^{\lfloor nt\rfloor\wedge\ton}\frac{n^{-1}\left({a}Z_i^{(n)}\right)^{\alpha}}{\alpha-1}
-\sum_{i=1}^{\lfloor nt\rfloor\wedge\ton}R(Y^{(n)}_{k-1},n^{-1/\alpha}{a}Z_i^{(n)})
\right)
}\\
&=
\exp
\left(-{a}A_{nt}\right)
\exp
\left(
-n^{-1}\sum_{i=1}^{\lfloor nt\rfloor\wedge\ton}\frac{\left({a}Z_i^{(n)}\right)^{\alpha}}{\alpha-1}
-\sum_{i=1}^{\lfloor nt\rfloor\wedge\ton}R(Y^{(n)}_{k-1},n^{-1/\alpha}{a}Z_i^{(n)})
\right).
\end{align*}
Let 
$$\Lambda_n=
-n^{-1}\sum_{i=1}^{\lfloor nt\rfloor\wedge\ton}\frac{\left(Z_i^{(n)}\right)^{\alpha}}{\alpha-1}
+\frac{v_\alpha(t)}{\alpha-1},$$
and write
 \[
{\mathbb E}\left[\exp
\left(
-A_{nt}\right)\right]=A_1+A_2,
\]
with 
$\displaystyle 
A_1
={\mathbb E}\left[\expp{
-A_{nt}}
\left(
1-
\expp{
\Lambda_n}
\right)
\right]
$ and 
$\displaystyle 
A_2
={\mathbb E}\left[\expp{
-A_{nt}}
\expp{
\Lambda_n}
\right]
$.

First of all, let  us prove that $A_1$ converges to 0  when $n$ tends to
$\infty $.   Recall that the  r.v $Z_i^{(n)}$ are uniformly  bounded by
$\beta^{-1}$ a.s.. Thanks to \eqref{eq:rupperboundR}, we have 
\[
\E[\expp{-2A_{nt} }]
=\E\left[M^{{(2)}}_{n,\lfloor nt\rfloor}
\exp
\left(
n^{-1}\sum_{i=1}^{\lfloor nt\rfloor\wedge\ton}\frac{\left(2Z_i^{(n)}\right)^{\alpha}}{\alpha-1}
+\sum_{i=1}^{\lfloor nt\rfloor\wedge\ton}R(Y^{(n)}_{k-1},2n^{-1/\alpha}Z_i^{(n)})
\right)\right]
\leq M,
\]
where $M$ is a finite constant which does not  depend on $n$. By Cauchy-Schwarz' inequality, we get that 
\begin{equation}\label{eq:A1}
(A_1)^2
\leq 
\left({\mathbb E}\left[\expp{-A_{nt}}
\left|1-\expp{\Lambda_n} \right|\right]\right)^2
\leq
 {\mathbb E}\left[\expp{-2A_{nt}}\right]
{\mathbb E}\left[\left(1-\expp{\Lambda_n}\right)^2\right]
\leq 
M{\mathbb E}\left[\left(1-\expp{\Lambda_n}\right)^2
\right].
\end{equation}
The quantity $\Lambda_n$ is bounded and goes to $0$ in
probability when $n$ goes to infinity {(see \eqref{eq:zalphacv})}. Therefore, the right-hand side of \reff{eq:A1} converges to 0. This implies that
$\lim_{n\rightarrow\infty } A_1=0$. 

Let us now consider the convergence of  $A_2$. Remark that 
\begin{equation*}
A_2={\mathbb E}
\left[
M^{{(1)}}_{n,\lfloor nt\rfloor} 
\exp\left(
\frac{v_\alpha(t)}{\alpha-1}
+\sum_{k=1}^{\lfloor nt\rfloor\wedge\ton}R(Y^{(n)}_{k-1},n^{-1/\alpha}Z_i^{(n)})\right)
\right].
\end{equation*}
Recall that ${\mathbb E}[M^{{(1)}}_{n,\lfloor nt\rfloor} ]=1$. Using \eqref{eq:rupperboundR},
we get 
$$
\exp{\left(-C_{\ref{eq:rupperboundR}}{(}\beta^{-1}{)}n^{-{\varepsilon_1}}
 +\frac{v_\alpha(t)}{\alpha-1}\right)}
\leq A_2\leq \exp{\left(C_{\ref{eq:rupperboundR}}{(}\beta^{-1}{)}n^{-{\varepsilon_1}}
 +\frac{v_\alpha(t)}{\alpha-1}\right)}.
$$
We get that $\lim_{n\to \infty }A_2=\expp{ v_\alpha(t) 
  /(\alpha-1)}$, which achieves the proof.

\noindent
\textbf{Convergence of  $B_{nt}$}.
 {Here, we will use a similar approach as the one we used on the first half of p.\pageref{eq:zalphacv}.} The process $(\igreken_{i},i\geq0)$ is decreasing.
So if for some $i\leq\lfloor nt\rfloor$, 
$\igreken_{i-1}< n\beta$,
then we have
$\igreken_{\lfloor nt\rfloor-1}< n\beta$.
Thus we get 
$B_{nt}=B_{nt}\ind_{\{\igreken_{(\lfloor nt\rfloor\wedge\tau^{(n)})-1}< n\beta\}}$.
Moreover,
\[
\{\igreken_{(\lfloor nt\rfloor\wedge\tau^{(n)})-1}< n\beta\}
\subset \{
n^{-1}\sum_{j=1}^{(\lfloor nt\rfloor\wedge\tau^{(n)})-1}(X^{(n)}_j- \frac{1}{{\alpha-1}}) \geq \varepsilon\}, 
\]
and then for any $\varepsilon'>0$ 
\begin{align*}
   \P( | B_{nt}|\geq \varepsilon')
&= \P(\ind_{\{\igreken_{(\lfloor nt\rfloor\wedge\tau^{(n)})-1}< n\beta\}} |B_{nt}|\geq \varepsilon' )\\
&\leq \P(\{\igreken_{(\lfloor nt\rfloor\wedge\tau^{(n)})-1}< n\beta)\\
& \leq  \P(n^{-1} \sum_{j=1}^{(\lfloor nt\rfloor\wedge\tau^{(n)})-1}(X^{(n)}_j- \frac{1}{{\alpha-1}}) \geq \varepsilon).
\end{align*}
Use \eqref{eq:fdvt} to
get that the right-hand side of the last inequality converges to $0$
as $n$ goes to infinity. 

\end{proof}

Now we deal with $I_{nt}^{(2)}$.
\begin{lem}\label{lem:calcul3}
We assume  that $\rho(t)=C_0t^{-\alpha}  +O(t^{-\alpha+\zeta})$ for  some 
  $C_0>0$ and $\zeta>1-1/\alpha$.
Then, for any $t<\alpha-1$, we have
$$
|I^{(2)}_{nt}|=\sum_{i=1}^{\lfloor nt\rfloor\wedge\ton}-(\log(1-\frac{\ixen_{i}+1}{\igreken_{i-1}})+\frac{\ixen_{i}+1}{\igreken_{i-1}})\xrightarrow{\mathbb{P}}0,$$
 when $n\to\infty$.

\end{lem}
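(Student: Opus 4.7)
The plan is to reduce everything to a good event on which every term $\frac{\ixen_i+1}{\igreken_{i-1}}$ is bounded away from $1$, and then to use the second-order Taylor bound $|\log(1-x)+x|\leq x^2/(1-x)$ valid for $x\in[0,1)$.

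First I would fix $t\in[0,\alpha-1)$, choose $\varepsilon\in(0,1-t/(\alpha-1))$, set $\beta=1-t/(\alpha-1)-\varepsilon>0$, and work on the event
$$A_n=\bigl\{\igreken_{\lfloor nt\rfloor}\geq n\beta\bigr\}.$$
Since $\igreken$ is decreasing, on $A_n$ we have $\igreken_{i-1}\geq n\beta\geq 2$ for every $i\leq\lfloor nt\rfloor$ and $n$ large; in particular $\lfloor nt\rfloor<\ton$, so the truncation $\wedge\ton$ in the sum is inactive, and each ratio $(\ixen_i+1)/\igreken_{i-1}$ is strictly less than $1$. To show $\P(A_n^c)\to 0$, I would write
$$A_n^c\subset\Bigl\{n^{-1}\sum_{j=1}^{\lfloor nt\rfloor}\bigl(\ixen_j-\tfrac{1}{\alpha-1}\bigr)\geq \varepsilon+o(1)\Bigr\},$$
and invoke \reff{eq:fdvt}, which gives $n^{-1}\sum_{j=1}^{\lfloor nt\rfloor}(\ixen_j-\tfrac{1}{\alpha-1})=n^{1/\alpha-1}V^{(n)}_t\xrightarrow{\P}0$.

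On $A_n$ the key observation is that $\igreken_i-1\geq n\beta-1\geq n\beta/2$ for $i\leq\lfloor nt\rfloor$ and $n$ large, so that
$$\igreken_{i-1}\bigl(\igreken_{i-1}-\ixen_i-1\bigr)=\igreken_{i-1}(\igreken_i-1)\geq \tfrac{1}{2}n^2\beta^2.$$
Applying $-\log(1-x)-x=\sum_{k\geq 2}x^k/k\leq x^2/(1-x)$ with $x=(\ixen_i+1)/\igreken_{i-1}$ then yields, on $A_n$,
$$|I^{(2)}_{nt}|\leq \sum_{i=1}^{\lfloor nt\rfloor}\frac{(\ixen_i+1)^2}{\igreken_{i-1}(\igreken_i-1)}\leq \frac{2}{n^2\beta^2}\sum_{i=1}^{\lfloor nt\rfloor}(\ixen_i+1)^2.$$

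Finally, to control the expectation of the right-hand side I would use the Markov property of $\igreken$ together with \reff{eq:rM2} and \reff{eq:rdlg}: conditionally on $\igreken_{i-1}=m$, $\ixen_i$ is distributed as the first decrement of the $m$-coalescent, so
$$\E\bigl[(\ixen_i)^2\bigm|\igreken_{i-1}=m\bigr]\leq C_{\ref{eq:rM2}}\frac{m^2}{g_m}=O(m^{2-\alpha})=O(n^{2-\alpha}).$$
Summing over $i\leq\lfloor nt\rfloor$ gives $\E[\sum_{i=1}^{\lfloor nt\rfloor}(\ixen_i+1)^2]=O(n^{3-\alpha})$, and hence
$$\E\bigl[|I^{(2)}_{nt}|\ind_{A_n}\bigr]=O(n^{1-\alpha})\xrightarrow[n\to\infty]{}0$$
because $\alpha>1$. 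Markov's inequality combined with $\P(A_n^c)\to 0$ then gives $I^{(2)}_{nt}\xrightarrow{\P}0$.

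The main obstacle is that $\log(1-x)+x$ blows up as $x\uparrow 1$, and the ratio $(\ixen_i+1)/\igreken_{i-1}$ can in principle approach $1$ when $\ixen_i$ is a large jump or when $i$ is close to $\ton$. Both issues are resolved simultaneously by restricting to $A_n$, which on the one hand prevents $i$ from reaching $\ton$ and on the other hand forces $\igreken_i-1\gtrsim n\beta$, yielding the uniform quadratic control of the $\log$ terms that the second-moment estimate \reff{eq:rM2} is just strong enough to handle.
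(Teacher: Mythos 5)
Your argument is correct and follows the same overall strategy as the paper's proof: restrict to the event that $\igreken_{\lfloor nt\rfloor}$ stays of order $n$ (whose complement is negligible, exactly as in the remark \reff{restlimit} at the start of the paper's proof and via \reff{eq:fdvt}), apply the second-order Taylor bound to $\log(1-x)+x$ uniformly on that event, and conclude with a first-moment estimate of order $n^{1-\alpha}$. Where you genuinely diverge is in how the quadratic term is controlled. The paper returns to the mixed-binomial representation \reff{eq:lawX1} of the jump law and proves the auxiliary inequality \reff{eq:lm3.6} (using $-\log(1-u)-u\le C'(a)u^2$ for $u\le 1-a$ together with $\E[B_{n,x}(B_{n,x}-1)]=n(n-1)x^2$ and the finiteness of $\Lambda$), which bundles the Taylor bound and the second moment into one integral estimate and yields $\E[B_n]\le C(a/2)\,nt/g_{an}\to 0$. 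You instead apply the Taylor bound termwise, keep the factor $1/(1-x)=\igreken_{i-1}/(\igreken_i-1)$ explicit — which forces you to check, as you do, that $\igreken_i-1\gtrsim n\beta$ on the good event — and then invoke the ready-made second-moment bound \reff{eq:rM2} combined with \reff{eq:rdlg}, using that $m^2/g_m=O(m^{2-\alpha})=O(n^{2-\alpha})$ uniformly in $2\le m\le n$ since $2-\alpha>0$. Both routes deliver the same rate $O(n^{1-\alpha})$; yours is slightly more economical in that it reuses the preliminary estimate \reff{eq:rM2} rather than redoing the binomial second-moment computation, while the paper's version avoids having to track the factor $\igreken_{i-1}/(\igreken_i-1)$ by truncating the binomial at $(1-a)n$ instead. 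I see no gap in your argument.
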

\begin{proof}
Let $0 \leq t< \alpha-1$. First of all, remark that:

\begin{equation}\label{restlimit}
\frac{Y_{\lfloor nt \rfloor}^{(n)}}{n}\xrightarrow{\P}\frac{\alpha-1-t}{\alpha-1}.
\end{equation}
Indeed, 
$$\frac{Y_{\lfloor nt \rfloor}^{(n)}}{n}=\frac{n-(\lfloor nt \rfloor)/(\alpha-1)}{n}-\frac{\sum_{k=1}^{\lfloor nt \rfloor}(X_{k}^{(n)}-1/(\alpha-1))}{n},$$
and we conclude using (\ref{eq:fdvt}) and the convergence of $\mathbb{P}(\ton> \lfloor nt \rfloor)$ to 1. Let us write 
\begin{align*}
|I_{nt}^{(2)}|
=A_n+B_n,
\end{align*}
with 
\[
A_n=|I_{nt}^{(2)}|\ind_{\left\{Y_{\lfloor nt \rfloor}^{(n)}<(1-t/({\alpha}-1))n/2\right\}}
\]and
\[
B_n=|I_{nt}^{(2)}|\ind_{\left\{Y_{\lfloor nt \rfloor}^{(n)}\geq (1-t/({\alpha}-1))n/2\right\}}.
\]
The convergence (\ref{restlimit}) implies that $A_n$ tends to 0 in probability. To prove the convergence of $B_n$, let us first notice that for $a\in (0,1)$, there exists a constant $C(a)$ such that, if $B_{n,x}$ is a binomial r.v. with
parameter $(n,x)$, then 
\begin{equation}
\label{eq:lm3.6}
0<-\int_{0}^{1}\E\left[\ind_{\{ 2\leq  B_{n,x}\leq (1-a)n\} }(\log(1-\frac{B_{n,x}}{n})+\frac{B_{n,x}}{n})\right]\nu(dx) \leq C(a).
\end{equation}
Indeed, there exists a constant $C'(a)$ such that for $u\in (0,1-a)$, $0<-\ln(1-u)-u\leq C'(a)u^2$. Hence, 
\begin{align*}
0
&<
-\int_{0}^{1}\E\left[\ind_{ \{2\leq  B_{n,x}\leq (1-a)n\} }(\log(1-\frac{B_{n,x}}{n})+\frac{B_{n,x}}{n})\right]\nu(dx)\\
&\leq 
C'(a)\int_{0}^{1}\E\left[\ind_{ \{2\leq  B_{n,x}\leq (1-a)n\} }(\frac{B_{n,x}}{n})^{2}\right]\nu(dx)\\
&\leq 
C'(a)\int_{0}^{1}\E\left[(\frac{B_{n,x}}{n})^{2}\right]\nu(dx)\\
&\leq 2C'(a)\int_{0}^{1}\E\left[\frac{B_{n,x}(B_{n,x}-1)}{n^{2}}\right]\nu(dx)\\
&=2C'(a)\frac{\int_{0}^{1}n(n-1)x^{2}\nu(dx)}{n^{2}}\\
&\leq 2C'(a)\Lambda([0,1])=:C(a).
\end{align*}

Let us set $a=(1-t/({\alpha}-1))/2$. Hence $B_n=|I_{nt}^{(2)}|\ind_{\left\{Y_{\lfloor nt \rfloor}^{(n)}\geq an\right\}}$. Notice that if $n$ is large enough such that $an\geq 2$, then if $Y_{\lfloor nt \rfloor}^{(n)}\geq an$ we have $\ton>nt$. Moreover, if $Y_{\lfloor nt \rfloor}^{(n)}\geq an$, for  $i\leq nt$, we have  $Y_{i}^{(n)}\geq an\geq a \igreken_{i-1}$ and $\ixen_{i}=\igreken_{i-1}-\igreken_{i}\leq (1-a)\igreken_{i-1}<(1-a/2)\igreken_{i-1}$. Using (\ref{eq:lawX1}), (\ref{eq:lm3.6}) and (\ref{eq:rdlg}), we get  that 
\begin{align*}
&\mathbb{E}[B_n]\\
\leq
&
\sum_{i=1}^{\lfloor nt\rfloor}
\mathbb{E}[
\mathbb{E}[
-(\log(1-\frac{\ixen_{i}+1}{\igreken_{i-1}})+\frac{\ixen_{i}+1}{\igreken_{i-1}})
\ind_{\{1\leq \ixen_{i}\leq (1-a)Y_{i-1}^{(n)} \}}
\ind_{\{\igreken_{i-1}\geq an\}}
|Y_{i-1}^{(n)}]
]
\\
\leq
&\sum_{i=1}^{\lfloor nt\rfloor}
\mathbb{E}[
-\mathbb{E}[
\int_{0}^{1}
\ind_{\{2\leq B_{Y_{i-1}^{(n)},x}\leq (1-a/2)Y_{i-1}^{(n)} \}}
\ind_{\{\igreken_{i-1}\geq an\}}
\frac{1}{g_{Y_{i-1}^{(n)}}}(\log(1-\frac{B_{Y_{i-1}^{(n)},x}}{Y_{i-1}^{(n)}})+\frac{B_{Y_{i-1}^{(n)},x}}{Y_{i-1}^{(n)}})\nu(dx)
|Y_{i-1}^{(n)}]
]\\
\leq 
&
\frac{C(a/2)nt}{g_{an}}\to 0,
\end{align*}
when $n$ tends to $\infty$. This achieves the proof of the Lemma.
\end{proof}

\section{A result on small-time behavior of the block process} 
We now turn to the study of the length of an external branch picked at random, denoted by $\teen$.
For any integer $k$ between 1 and $\ton$, define $A^{(n)}_{k}$ as the time when the $k$th jump is achieved.
This variable can be expressed as a sum of $k$ independent exponential random variables.
More precisely, 
$$A^{(n)}_k=\sum_{i=1}^{k\wedge\tau^{(n)}}\frac{e_{i}}{g_{Y_{i-1}^{(n)}}},$$
where the $e_{i}$'s are independent standard exponential variables. 
Notice that $\teen=\aen_{\sigmen}$.
We will first study asymptotics of $\aen_k$. For this, we use a two-step approximation method close to Section 4 of \cite{DDS2008}.
Define first
$$\taen_k=\sum_{i=1}^{k\wedge\tau^{(n)}}\frac{1}{g_{Y_{i-1}^{(n)}}},$$
obtained replacing the $e_i$'s by their mean,and 
$$\haen_k=\inv{C_0\Gamma(2-\alpha)}\sum_{i=1}^{k\wedge\tau^{(n)}}(Y_{i-1}^{(n)})^{-\alpha},$$
obtained replacing $g_b$ by its equivalent in \eqref{eq:rdlg}.

\begin{prop}
\label{timerescaled}
We assume  that $\rho(t)=C_0t^{-\alpha}  +O(t^{-\alpha+\zeta})$ for  some 
  $C_0>0$ and $\zeta>1-1/\alpha$.
Then, for any $t<\alpha-1$, we have
 $$n^{\alpha-1}\aen_{\lfloor nt \rfloor} 
 \overset{\P}{\to}
 \inv{C_0\Gamma(2-\alpha)}((1-\frac{t}{\alpha-1})^{1-\alpha}-1),$$
  when $n\to\infty$.
\end{prop}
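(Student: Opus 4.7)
The plan is to carry out the two-step approximation announced in the text: decompose
\begin{equation*}
n^{\alpha-1}\aen_{\lfloor nt\rfloor} = n^{\alpha-1}\haen_{\lfloor nt\rfloor} + n^{\alpha-1}(\taen_{\lfloor nt\rfloor}-\haen_{\lfloor nt\rfloor}) + n^{\alpha-1}(\aen_{\lfloor nt\rfloor}-\taen_{\lfloor nt\rfloor}),
\end{equation*}
identify the limit from the leading term, and show the two error terms vanish in probability.

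For the main term, I would apply Lemma \ref{lem:calcul1} with $\eta=\alpha$ (admissible since $t<\alpha-1$) to obtain
\begin{equation*}
n^{\alpha-1}\sum_{i=1}^{\lfloor nt\rfloor\wedge\ton}(\igreken_{i-1})^{-\alpha} \xrightarrow{\P} \int_0^t\left(1-\frac{x}{\alpha-1}\right)^{-\alpha}dx = \left(1-\frac{t}{\alpha-1}\right)^{1-\alpha}-1,
\end{equation*}
where the explicit evaluation uses the substitution $u=1-x/(\alpha-1)$. Dividing by $C_0\Gamma(2-\alpha)$ gives exactly the announced limit for $n^{\alpha-1}\haen_{\lfloor nt\rfloor}$.

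To handle both error terms, I would localize on the event that $\igreken$ stays of order $n$. Fix $\beta\in(0,1-t/(\alpha-1))$ and set $E_n:=\{\igreken_{\lfloor nt\rfloor-1}\geq n\beta\}$; thanks to \eqref{eq:fdvt} (exactly as exploited in \eqref{restlimit}) one has $\P(E_n)\to 1$, and on $E_n$ the monotonicity of $\igreken$ forces $\igreken_{i-1}\geq n\beta$ for every $i\leq\lfloor nt\rfloor\wedge\ton$. The asymptotic \eqref{eq:rdlg} gives $1/g_b - b^{-\alpha}/(C_0\Gamma(2-\alpha)) = O(b^{-\alpha-\min(\zeta,1)})$, hence on $E_n$
\begin{equation*}
n^{\alpha-1}|\taen_{\lfloor nt\rfloor}-\haen_{\lfloor nt\rfloor}| \leq C\, n^{\alpha-1}\cdot nt\cdot(n\beta)^{-\alpha-\min(\zeta,1)} = O(n^{-\min(\zeta,1)}).
\end{equation*}
For the stochastic error, I write $\aen_{\lfloor nt\rfloor}-\taen_{\lfloor nt\rfloor}=\sum_{i=1}^{\lfloor nt\rfloor\wedge\ton}(e_i-1)/g_{\igreken_{i-1}}$; conditionally on $\cy$, this sum is centered with variance $\sum 1/g_{\igreken_{i-1}}^2$, which on the $\cy$-measurable event $E_n$ is at most $Cnt(n\beta)^{-2\alpha}=O(n^{1-2\alpha})$. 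Conditional Chebyshev then gives
\begin{equation*}
\P\left(n^{\alpha-1}|\aen_{\lfloor nt\rfloor}-\taen_{\lfloor nt\rfloor}|>\varepsilon,\,E_n\right) \leq \varepsilon^{-2}\, n^{2(\alpha-1)}\cdot O(n^{1-2\alpha}) = O(n^{-1}),
\end{equation*}
and combining with $\P(E_n^c)\to 0$ yields the claim.

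The only delicate point is keeping $\igreken$ of order $n$ up to step $\lfloor nt\rfloor$, which is precisely what the stable-law limit \eqref{eq:fdvt} provides when $t<\alpha-1$; this also explains why the range of $t$ cannot be extended beyond $\alpha-1$ without a change of scaling, since near absorption $\ton\approx(\alpha-1)n$ the sum $\sum(\igreken_{i-1})^{-\alpha}$ diverges. No step in the above scheme appears to require sharper tools than those already recalled in Section~2.
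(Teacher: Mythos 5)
Your proof is correct and follows essentially the same route as the paper: the same three-term decomposition via $\haen_{\lfloor nt\rfloor}$ and $\taen_{\lfloor nt\rfloor}$, with the limit identified by Lemma \ref{lem:calcul1} at $\eta=\alpha$. The only cosmetic difference is that you control the two error terms by localizing on $\{\igreken_{\lfloor nt\rfloor-1}\geq n\beta\}$ and using crude deterministic bounds plus conditional Chebyshev, whereas the paper reapplies Lemma \ref{lem:calcul1} with $\eta=\alpha+\min(\zeta,1)$ and $\eta=2\alpha$ (together with Doob's inequality); both are equally valid second-moment arguments.
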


The proof is a straight consequence of Lemma \ref{lem:calcul1} with $\eta=\alpha$ and the following Lemmas \ref{lem:app1} and \ref{lem:app2}.

\begin{lem}
\label{lem:app1}
Under the assumptions of Proposition \ref{timerescaled}, we have
$$n^{\alpha-1}(\taen_{\lfloor nt \rfloor}-\haen_{\lfloor nt \rfloor})\stackrel{\P}{\rightarrow}0,$$
  when $n\to\infty$.
\end{lem}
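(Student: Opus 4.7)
The plan is to compare $1/g_k$ to its leading-order equivalent $1/(C_0\Gamma(2-\alpha)k^\alpha)$ term-by-term, obtain a sharp pointwise error bound from (\ref{eq:rdlg}), and then sum the errors along the trajectory using Lemma \ref{lem:calcul1} with a well-chosen exponent $\eta>\alpha$. Since the two processes coincide past time $\tau^{(n)}$ (both sums are truncated), we only need to control the difference up to $\lfloor nt\rfloor\wedge\tau^{(n)}$.

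First, from (\ref{eq:rdlg}) one has, for every integer $k\ge 2$,
\[
\frac{1}{g_k}=\frac{1}{C_0\Gamma(2-\alpha)k^\alpha}\cdot\frac{1}{1+O(k^{-\min(\zeta,1)})}=\frac{1}{C_0\Gamma(2-\alpha)k^\alpha}+O\left(k^{-\alpha-\min(\zeta,1)}\right).
\]
Applied to $k=Y_{i-1}^{(n)}$ (for $i\le \tau^{(n)}$, so $Y_{i-1}^{(n)}\ge 2$), this yields a deterministic constant $C>0$ such that
\[
\left|\taen_{\lfloor nt\rfloor}-\haen_{\lfloor nt\rfloor}\right|\le C\sum_{i=1}^{\lfloor nt\rfloor\wedge\tau^{(n)}}\bigl(Y_{i-1}^{(n)}\bigr)^{-\alpha-\min(\zeta,1)}.
\]

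Now set $\eta=\alpha+\min(\zeta,1)$, which satisfies $\eta>\alpha>1$. Since $t<\alpha-1$, the integral $\nu_\eta(t)=\int_0^t(1-x/(\alpha-1))^{-\eta}dx$ is finite. By Lemma \ref{lem:calcul1}(1), there is convergence in probability
\[
n^{\eta-1}\sum_{i=1}^{\lfloor nt\rfloor\wedge\tau^{(n)}}\bigl(Y_{i-1}^{(n)}\bigr)^{-\eta}\xrightarrow{\mathbb{P}}\nu_\eta(t).
\]
Multiplying the pointwise bound above by $n^{\alpha-1}$ gives
\[
n^{\alpha-1}\bigl|\taen_{\lfloor nt\rfloor}-\haen_{\lfloor nt\rfloor}\bigr|\le C\,n^{\alpha-\eta}\cdot n^{\eta-1}\sum_{i=1}^{\lfloor nt\rfloor\wedge\tau^{(n)}}\bigl(Y_{i-1}^{(n)}\bigr)^{-\eta},
\]
and $n^{\alpha-\eta}=n^{-\min(\zeta,1)}\to 0$, so the right-hand side tends to $0$ in probability, proving the claim.

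The only mild subtlety is that Lemma \ref{lem:calcul1} is stated for general $\eta\in\mathbb{R}$ but the proof is sketched only for $\eta=\alpha-1$; one should check that the case $\eta>\alpha$ does not introduce a pathology when some $Y_{i-1}^{(n)}$ is very small near $\tau^{(n)}$. This is handled automatically by the truncation at $\tau^{(n)}\wedge\lfloor nt\rfloor$ and the control on the last block-count $Y_{\lfloor nt\rfloor}^{(n)}/n$ provided by (\ref{eq:fdvt}), which guarantees $Y_{i-1}^{(n)}$ stays of order $n$ uniformly in $i\le\lfloor nt\rfloor\wedge\tau^{(n)}$ with high probability for $t<\alpha-1$. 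This ensures the large negative powers of $Y_{i-1}^{(n)}$ do not blow up and is the only point that requires any care.
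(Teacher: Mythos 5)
Your proof is correct and follows essentially the same route as the paper: expand $1/g_k$ via \eqref{eq:rdlg} to get a pointwise error of order $k^{-\alpha-\min(\zeta,1)}$, then apply Lemma \ref{lem:calcul1} with $\eta=\alpha+\min(\zeta,1)$ and note that $n^{\alpha-1}\cdot n^{1-\eta}=n^{-\min(\zeta,1)}\to 0$. You merely spell out the power counting that the paper leaves implicit.
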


\begin{proof}
Use \eqref{eq:rdlg} to get\[
\taen_{\lfloor nt \rfloor}-\haen_{\lfloor nt \rfloor}
=\sum_{i=1}^{\lfloor nt \rfloor\wedge\tau^{(n)} }
\left(Y^{(n)}_{i-1}\right)^{-\alpha} O
\left(\left(Y^{(n)}_{i-1}\right)^{-\min(\zeta,1)} \right).
\]
The result then follows from Lemma \ref{lem:calcul1} with $\eta=\alpha+\min(\zeta,1)$.
\end{proof}

\begin{lem}
\label{lem:app2}
Under the assumptions of Proposition \ref{timerescaled}, we have
$$n^{\alpha-1}(\aen_{\lfloor nt \rfloor}-\taen_{\lfloor nt \rfloor)})\stackrel{\P}{\rightarrow}0,$$
  when $n\to\infty$.
\end{lem}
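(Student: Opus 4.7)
The plan is to exploit conditional independence. Given the filtration $\cy$ generated by $Y^{(n)}$, the i.i.d.\ $\mathrm{Exp}(1)$ variables $(e_i)_{i\geq 1}$ used in the construction of $\aen$ remain independent with the same law, so
\[
\aen_{\lfloor nt \rfloor} - \taen_{\lfloor nt \rfloor} \;=\; \sum_{i=1}^{\lfloor nt \rfloor \wedge \ton} \frac{e_i - 1}{g_{Y^{(n)}_{i-1}}}
\]
is, conditionally on $\cy$, a sum of independent centered random variables. I would estimate its conditional second moment and conclude by a conditional Chebyshev inequality.

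Since $\Var(e_i)=1$, the conditional variance of $n^{\alpha-1}(\aen_{\lfloor nt \rfloor} - \taen_{\lfloor nt \rfloor})$ equals
\[
n^{2\alpha - 2} \sum_{i=1}^{\lfloor nt \rfloor \wedge \ton} g_{Y^{(n)}_{i-1}}^{-2}.
\]
By \reff{eq:rdlg}, $g_b^{-2} = (C_0\Gamma(2-\alpha))^{-2}\, b^{-2\alpha}\bigl(1 + O(b^{-\min(\zeta,1)})\bigr)$, so this expression is, up to a negligible correction,
\[
\frac{n^{-1}}{(C_0\Gamma(2-\alpha))^2}\cdot n^{2\alpha-1} \sum_{i=1}^{\lfloor nt \rfloor \wedge \ton} (Y^{(n)}_{i-1})^{-2\alpha}.
\]
Applying Lemma~\ref{lem:calcul1} with $\eta = 2\alpha$ gives
\[
n^{2\alpha-1} \sum_{i=1}^{\lfloor nt \rfloor \wedge \ton} (Y^{(n)}_{i-1})^{-2\alpha} \xrightarrow{\P} \int_0^t\!\left(1 - \frac{x}{\alpha-1}\right)^{-2\alpha}\!dx \;<\; \infty,
\]
which is finite since $t<\alpha-1$; multiplying by $n^{-1}$ shows that the conditional variance tends to $0$ in probability.

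To finish, for any $\varepsilon>0$ conditional Chebyshev yields
\[
\P\bigl(n^{\alpha-1}|\aen_{\lfloor nt \rfloor} - \taen_{\lfloor nt \rfloor}| > \varepsilon \bigm| \cy\bigr) \;\leq\; \min\!\Bigl(1,\;\varepsilon^{-2}\, n^{2\alpha-2} \sum_{i=1}^{\lfloor nt \rfloor \wedge \ton} g_{Y^{(n)}_{i-1}}^{-2}\Bigr),
\]
and since the right-hand side is bounded by $1$ and tends to $0$ in probability, dominated convergence gives convergence of the unconditional probability to $0$. The only mildly delicate point is controlling the $O(b^{-\min(\zeta,1)})$ correction in $g_b^{-2}$, but since the dominant mass of the summation is carried by indices with $Y^{(n)}_{i-1}$ of order $n$ (as ensured by \reff{eq:fdvt} and already exploited in earlier lemmas), this correction contributes only a lower-order term and does not affect the limit.
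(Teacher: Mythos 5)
Your proof is correct and follows essentially the same route as the paper: condition on the filtration $\cy$, compute the conditional second moment $n^{2\alpha-2}\sum_{i=1}^{\lfloor nt\rfloor\wedge\ton} g_{Y^{(n)}_{i-1}}^{-2}$, and send it to $0$ via \reff{eq:rdlg} together with Lemma~\ref{lem:calcul1} applied with $\eta=2\alpha$. The only (immaterial) difference is that the paper controls $\sup_t$ of the squared difference via Doob's inequality before invoking the same bound, whereas you work at fixed $t$ with conditional Chebyshev and bounded convergence, which fully suffices for the statement as given.
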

\begin{proof}
Recall that $\cy=(\cy_k,  k\geq 0)$  denotes the  filtration generated  by  $Y$. 
Conditionally on $\cy$, the random variables $\displaystyle
   \frac {e_i-1}{g_{Y^{(n)}_{i-1} }}$ are independent with zero mean. We
   deduce that 
\begin{align*}
    \E\left[\sup_{t\geq 0} (n^{\alpha-1}(\aen_{\lfloor nt \rfloor}-\taen_{\lfloor nt \rfloor}))^2 |\cy\right]
 &=n^{2\alpha-2}
 \E\left[\sup_{t\geq 0} \left(\sum_{i=1}^{\lfloor nt \rfloor\wedge\tau^{(n)}}
 \frac{e_i-1 }{g_{Y^{(n)}_{i-1} }}\right)^2 |\cy\right]\\
&\leq 4n^{2\alpha-2}
\sum_{i=1}^{\lfloor nt \rfloor\wedge\tau^{(n)}} \left(\frac{1
}{g_{Y^{(n)}_{i-1} }}\right)^2,
\end{align*}
where we used Doob's  inequality for
the  inequality. Thanks to \reff{eq:rdlg} and Lemma \ref{lem:calcul1} with $\eta=2\alpha$, we get the $4n^{2\alpha-2}
\sum_{i=1}^{\lfloor nt \rfloor\wedge\tau^{(n)}} \left(\frac{1
}{g_{Y^{(n)}_{i-1} }}\right)^2$ converges to $0$ in probability.

\end{proof}

Heuristically, combining Theorem \ref{th:cvsig} and Proposition \ref{timerescaled}, we should get that $n^{\alpha-1}\teen=n^{\alpha-1}\aen_{\sigmen}$
converges in law to 
$\inv{C_0\Gamma(2-\alpha)}((1-\sigma)^{1-\alpha}-1).$
This line of proof will be followed in the last section. However, in the next section we will first present another way to prove this result with a method based on the consistency property of exchangeable coalescents.
As a first step to this approach, we end this session with a result about small-time behavior of the block-counting process.

Let $\eren_t$ denote the number of blocks of the $n$-coalescent $\Pi^{(n)}$ at time $t$. The initial value $\eren_0$ is $n$. 
We show that the limit law of the process $\eren$ is deterministic under a certain time rescaling

\begin{theo}
\label{th:R}
We assume  that $\rho(t)=C_0t^{-\alpha}  +O(t^{-\alpha+\zeta})$ for  some 
  $C_0>0$ and $\zeta>1-1/\alpha$.
For any $t_{0}>0,\varepsilon>0$ , we have 
 \begin{equation}\label{eq:cvR}
 \P(\sup_{0\leq t \leq t_{0}}|n^{-1}\eren_{tn^{1-\alpha}}-(1+C_0\Gamma(2-\alpha)t)^{-1/(\alpha-1)}|>\varepsilon)\to0,
 \end{equation}
 when $n\to\infty$. 
\end{theo}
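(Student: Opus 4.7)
The plan is to read off the small-time behaviour of $R^{(n)}$ by inverting the asymptotics of the jump time process $A^{(n)}$ established in Proposition \ref{timerescaled}, and then composing with the asymptotics of the block-count-after-$k$-jumps process $Y^{(n)}$. Indeed, $R^{(n)}_s = Y^{(n)}_{N^{(n)}(s)}$ where $N^{(n)}(s) = \max\{k\colon A^{(n)}_k \leq s\}$, so if one knows in what sense $A^{(n)}_{\lfloor ns\rfloor}$ converges as a function of $s$ and in what sense $Y^{(n)}_{\lfloor ns\rfloor}$ converges, one can invert the former and plug into the latter.

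\textbf{Step 1 (uniform asymptotics of $A^{(n)}$).} Fix $t_0>0$ and set $s_0 = (\alpha-1)\bigl(1 - (1+C_0\Gamma(2-\alpha)t_0)^{-1/(\alpha-1)}\bigr)\in (0,\alpha-1)$. I would strengthen Proposition \ref{timerescaled} to
\[
\sup_{0\leq s\leq s_0}\bigl|n^{\alpha-1}A^{(n)}_{\lfloor ns\rfloor} - \varphi(s)\bigr|\xrightarrow{\P} 0,\qquad \varphi(s) := \tfrac{1}{C_0\Gamma(2-\alpha)}\bigl((1-s/(\alpha-1))^{1-\alpha}-1\bigr).
\]
This is already essentially contained in the tools of Section~2 and the proof of Proposition \ref{timerescaled}: the approximation $\hat A^{(n)}_{\lfloor ns\rfloor}$ is a monotone partial sum to which the uniform statement in Lemma \ref{lem:calcul1}(1) with $\eta=\alpha$ applies directly, Lemma \ref{lem:app1} (uniform version via Lemma \ref{lem:calcul1}(1) with $\eta = \alpha+\min(\zeta,1)$) controls $\tilde A^{(n)}_{\lfloor ns\rfloor} - \hat A^{(n)}_{\lfloor ns\rfloor}$ uniformly, and the supremum already appearing in Doob's inequality in the proof of Lemma \ref{lem:app2} yields the uniform bound for $A^{(n)}_{\lfloor ns\rfloor}-\tilde A^{(n)}_{\lfloor ns\rfloor}$.

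\textbf{Step 2 (uniform asymptotics of $Y^{(n)}$).} I would similarly upgrade the pointwise convergence~\eqref{restlimit} to
\[
\sup_{0\leq s\leq s_0}\bigl|n^{-1}Y^{(n)}_{\lfloor ns\rfloor} - \phi(s)\bigr|\xrightarrow{\P} 0,\qquad \phi(s) := 1-\tfrac{s}{\alpha-1}.
\]
Writing $n^{-1}Y^{(n)}_{\lfloor ns\rfloor} = \phi(\lfloor ns\rfloor/n) - n^{-1+1/\alpha}V^{(n)}_{s}$ with $V^{(n)}$ as in \eqref{eq:fdvt}, the supremum over $[0,s_0]$ of the error term is $n^{-1+1/\alpha}\sup_{s\leq s_0}|V^{(n)}_s|$, which goes to $0$ in probability by \eqref{eq:fdvt} (applied with tightness coming from the convergence of finite-dimensional marginals of a Lévy process and the monotonicity of the underlying sums, together with $\P(\tau^{(n)}<\lfloor ns_0\rfloor)\to 0$).

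\textbf{Step 3 (inversion and composition).} The limit $\varphi$ is continuous and strictly increasing on $[0,\alpha-1)$ with $\varphi(0)=0$ and $\varphi(s_0)=t_0$, so its inverse $\psi:=\varphi^{-1}$ is continuous and strictly increasing on $[0,t_0]$, and the map sending a non-decreasing function $h$ on $[0,s_0]$ close to $\varphi$ in sup-norm to its right-continuous inverse is uniformly continuous on $[0,t_0]$. Applied to the non-decreasing step function $s\mapsto n^{\alpha-1}A^{(n)}_{\lfloor ns\rfloor}$, Step~1 yields
\[
\sup_{0\leq t\leq t_0}\bigl|n^{-1}N^{(n)}(tn^{1-\alpha}) - \psi(t)\bigr|\xrightarrow{\P} 0.
\]
Combining this with Step~2, using uniform continuity of $\phi$ on $[0,s_0]$ and the identity $\phi(\psi(t)) = (1+C_0\Gamma(2-\alpha)t)^{-1/(\alpha-1)}$, gives $R^{(n)}_{tn^{1-\alpha}}/n = \phi(N^{(n)}(tn^{1-\alpha})/n) + o_\P(1)$ uniformly in $t\in[0,t_0]$, which is exactly \eqref{eq:cvR}.

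\textbf{Main obstacle.} The serious technical point is Step~1: pointwise convergence of $n^{\alpha-1}A^{(n)}_{\lfloor ns\rfloor}$ to $\varphi(s)$ is already given by Proposition \ref{timerescaled}, but the theorem requires the supremum over $t\in[0,t_0]$ and not just a pointwise statement, so one must check that each of the three ingredients ($\hat A^{(n)}$ via Lemma \ref{lem:calcul1}(1), $\tilde A^{(n)}-\hat A^{(n)}$ via \eqref{eq:rdlg} and Lemma \ref{lem:calcul1}(1), and $A^{(n)}-\tilde A^{(n)}$ via Doob) is genuinely uniform on $[0,s_0]$. Once this is in place, the inversion argument of Step~3, which is standard for monotone functions converging to a continuous limit, finishes the proof essentially for free.
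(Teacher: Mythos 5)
Your proposal is correct, and its skeleton is the same as the paper's: both rest on the identity $R^{(n)}_{A^{(n)}_k}=Y^{(n)}_k$, on Proposition \ref{timerescaled} for the jump times, and on the law of large numbers \eqref{restlimit} for $Y^{(n)}$. The difference is in how monotonicity is deployed. The paper never proves (nor needs) your Step 1: for each fixed $t$ it evaluates $A^{(n)}$ only at the two deterministic pre-image indices $\lfloor nr(t\pm\delta t)\rfloor$ with $r$ as in \eqref{eq:defrt}, where the \emph{pointwise} statement of Proposition \ref{timerescaled} says $n^{\alpha-1}A^{(n)}_{\lfloor nr(t\pm\delta t)\rfloor}\to t\pm\delta t$, and then uses monotonicity of $s\mapsto R^{(n)}_s$ to sandwich $R^{(n)}_{tn^{1-\alpha}}$ between $Y^{(n)}_{\lfloor nr(t-\delta t)\rfloor}$ and $Y^{(n)}_{\lfloor nr(t+\delta t)\rfloor}$; the supremum over $t\in[0,t_0]$ is then recovered at the very end from monotonicity of $R^{(n)}$ and of the continuous limit (the Dini/P\'olya argument you invoke in Step 2). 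So the ``main obstacle'' you identify is one you impose on yourself: the sandwich makes uniform convergence of $s\mapsto n^{\alpha-1}A^{(n)}_{\lfloor ns\rfloor}$ unnecessary. That said, your Step 1 is sound as sketched (Lemma \ref{lem:calcul1}(1) is already a supremum statement, and Doob's inequality in Lemma \ref{lem:app2} already controls the supremum), and it is not wasted effort --- the paper proves essentially this strengthening later as Proposition \ref{timerescaled2} for the alternative proof of Theorem \ref{thm2}. Two small points to tidy in your write-up: in Step 3 the generalized inverse of $n^{\alpha-1}A^{(n)}_{\lfloor n\cdot\rfloor}$ at $t_0$ is not determined by its restriction to $[0,s_0]$ with $s_0=\psi(t_0)$, so you should run Steps 1--2 on $[0,s_0']$ for some $s_0'\in(s_0,\alpha-1)$ to bound $N^{(n)}(t_0n^{1-\alpha})/n$ from above; and in Step 2 the uniformity should be stated as following from the P\'olya argument applied separately to the two monotone pieces $n^{-1}\sum_{k\le \lfloor ns\rfloor\wedge\ton}X^{(n)}_k$ and $\lfloor ns\rfloor\wedge\ton/(n(\alpha-1))$, since $V^{(n)}$ itself is not monotone and \eqref{eq:fdvt} gives only finite-dimensional convergence.
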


\begin{proof}
Let $0<r<\alpha-1$, we have the following relation :
\begin{align*}
\eren_{\aen_{\lfloor nr \rfloor}}=\igreken_{\lfloor nr \rfloor}&=
n-\sum_{j=1}^{\lfloor nr \rfloor\wedge \ton} X_{j}^{(n)}
\end{align*}

Let $t\in[0,t_{0}]$, and define
\begin{equation}\label{eq:defrt}
r(t)=(\alpha-1)(1-(1+C_0\Gamma(2-\alpha)t)^{-1/(\alpha-1)}),
\end{equation}
on $[0,t_{0}]$. Notice that
 $$\inv{C_0\Gamma(2-\alpha)}((1-\frac{r(t)}{\alpha-1})^{1-\alpha}-1)=t.$$
  Then thanks to Proposition \ref{timerescaled}, $n^{\alpha-1}\aen_{\lfloor nr(t) \rfloor}$ converges in probability to $t$.

Using the remark at the beginning of the proof in Lemma \ref{lem:calcul3}, we get the convergence

$$
n^{-1}\eren_{\aen_{\lfloor nr(t) \rfloor}}=\frac{Y_{\lfloor nr(t)\rfloor}^{(n)}}{n}
\overset{\P}{\to} (1-\frac{r(t)}{\alpha-1})=(1+C_0\Gamma(2-\alpha){t})^{-1/(\alpha-1)},
$$
when $n\to\infty$.
Moreover, since $R_{t}^{(n)}$ is decreasing,  then for any $0<\delta <1,$
$$\lim_{n\rightarrow \infty}\mathbb{P}(\eren_{\aen_{\lfloor nr(t-\delta t) \rfloor}}\leq \eren_{tn^{1-\alpha+1}}\leq \eren_{\aen_{\lfloor nr(t+\delta t) \rfloor}})=1.$$

The constant $\delta $ being arbitrary, we thus obtain the convergence in probability of $n^{-1} \eren_{tn^{1-\alpha}} $ to $(1+C_0\Gamma(2-\alpha)t)^{-1/(\alpha-1)}$. 

We obtain \eqref{eq:cvR} using again the fact that $R^{(n)}$ is a decreasing process.

\end{proof}

In fact, the asymptotic result concerning block counting process of Kingman's coalescent is also valid. The method is almost identical to that employed in the above Theorem.  In the context of Kingman's coalescent, we use the same notations $\Pi^{(n)}$, $A_{i}^{(n)}$, $R^{(n)}$. 
\begin{theo}
In the setting of the Kingman's coalescent, 
for any $t_{0}>0,\varepsilon>0$ , we have 
 \begin{equation}\label{eq:cvRKingman}
 \P(\sup_{0\leq t \leq t_{0}}|n^{-1}\eren_{tn^{-1}}-(1+t/2)^{-1}|>\varepsilon)\to0
 \end{equation}
 when $n\to\infty$. 
\end{theo}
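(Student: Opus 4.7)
The plan is to mirror the three-stage proof of Theorem~\ref{th:R}: establish an asymptotic for the jump times $\aen_{\lfloor nr \rfloor}$, deduce one for $\igreken_{\lfloor nr \rfloor}/n$, and close with a monotonicity sandwich argument. For Kingman's coalescent the structure simplifies dramatically: every collision is a binary merger, so $\ixen_i=1$, $\igreken_{i-1}=n-i+1$ and $\ton=n-1$ are deterministic, and the only randomness lies in the waiting times $e_i/g_{\igreken_{i-1}}=2e_i/((n-i+1)(n-i))$.

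The first step is to prove the Kingman analog of Proposition~\ref{timerescaled}: for every $r\in[0,1)$,
\begin{equation}\label{eq:kingman-jump}
n\aen_{\lfloor nr\rfloor}\overset{\P}{\to}\frac{2r}{1-r}.
\end{equation}
Replacing the $e_i$'s by their mean gives the deterministic quantity
\[
n\taen_{\lfloor nr\rfloor}=n\sum_{i=1}^{\lfloor nr\rfloor}\frac{2}{(n-i+1)(n-i)}=2n\left(\frac{1}{n-\lfloor nr\rfloor}-\frac{1}{n}\right)\longrightarrow\frac{2r}{1-r}
\]
by telescoping. The fluctuation $n(\aen_{\lfloor nr\rfloor}-\taen_{\lfloor nr\rfloor})$ is a sum of independent centered variables whose variance is bounded by
\[
4n^2\sum_{i=1}^{\lfloor nr\rfloor}\frac{1}{(n-i+1)^2(n-i)^2}\leq\frac{4n^2\lfloor nr\rfloor}{(n-\lfloor nr\rfloor)^4}=O(n^{-1}),
\]
so Chebyshev's inequality yields \eqref{eq:kingman-jump}. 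Note that this step is genuinely easier than its Beta counterpart, because the determinism of $\igreken$ removes any need for the Laplace-transform bookkeeping \eqref{eq:rphin}--\eqref{eq:rupperboundR}.

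Next, in analogy with \eqref{eq:defrt}, I would set $r(t)=t/(t+2)$, chosen so that $2r(t)/(1-r(t))=t$ and $1-r(t)=(1+t/2)^{-1}$. By \eqref{eq:kingman-jump}, $n\aen_{\lfloor nr(t)\rfloor}\overset{\P}{\to}t$, while
\[
\frac{\eren_{\aen_{\lfloor nr(t)\rfloor}}}{n}=\frac{\igreken_{\lfloor nr(t)\rfloor}}{n}=\frac{n-\lfloor nr(t)\rfloor}{n}\longrightarrow 1-r(t)=(1+t/2)^{-1}.
\]
Finally, since $\eren^{(n)}$ is nonincreasing, for any $\delta\in(0,1)$ the event $\{\aen_{\lfloor nr((1-\delta)t)\rfloor}\leq t/n\leq \aen_{\lfloor nr((1+\delta)t)\rfloor}\}$ has probability tending to $1$, and on this event
\[
\frac{\igreken_{\lfloor nr((1+\delta)t)\rfloor}}{n}\leq\frac{\eren_{t/n}}{n}\leq\frac{\igreken_{\lfloor nr((1-\delta)t)\rfloor}}{n}.
\]
Sending $\delta\to 0$ yields the pointwise convergence $n^{-1}\eren_{t/n}\overset{\P}{\to}(1+t/2)^{-1}$, and the uniform statement on $[0,t_0]$ follows, exactly as in Theorem~\ref{th:R}, from the monotonicity of $\eren^{(n)}$ together with the continuity and strict monotonicity of the limit $t\mapsto(1+t/2)^{-1}$, by covering $[0,t_0]$ with a finite grid and applying a Dini-type argument on each subinterval. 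The only substantive obstacle is \eqref{eq:kingman-jump}, and as shown above this reduces to an elementary telescoping identity plus a one-line variance estimate, so no new tool is required beyond the ones already used in Theorem~\ref{th:R}.
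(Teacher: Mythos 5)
Your proof is correct and follows essentially the same route as the paper's: both compute the mean of $nA^{(n)}_{\lfloor nr\rfloor}$ by the telescoping identity $2n(\tfrac{1}{n-\lfloor nr\rfloor}-\tfrac{1}{n})$, control the fluctuations by an $O(n^{-1})$ variance bound, invert via $r(t)=t/(t+2)$, and close with the monotonicity sandwich on $R^{(n)}$. The only cosmetic difference is that you phrase the second moment step as Chebyshev rather than $L^2$ convergence, and you spell out the determinism of $Y^{(n)}$ and the final uniformity argument slightly more explicitly.
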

Remark that this Theorem shows a nice continuity from Beta-coalescent(the process that we consider is more general but contains Beta-coalescent) to Kingman's coalescent., 
setting $\alpha=2$ in Theorem \ref{th:R}.
\begin{proof}

Recall that $A_{i}^{(n)}$ is the time when $i$th jump is achieved. When $\Pi^{(n)}$ has $b$ individuals at some time $t$, then the process encounters the following coalescence at rate ${b \choose 2}$ where two randomly chosen individuals will be coalesced. $\Pi^{(n)}$ remains $1$ when all individuals are coalesced. 

For $0<t<1$, we have
$$A_{\lfloor nt \rfloor}^{(n)}=\sum_{k=n-\lfloor nt \rfloor +1}^{n}\frac{e_{k}}{\binom{k}{2}}$$
where $e_{i}$s are i.i.d unit exponential variables. Notice that 
$$\mathbb{E}[nA_{\lfloor nt \rfloor}^{(n)}]=\sum_{k=n-\lfloor nt \rfloor+1}^{n}\frac{n}{{k \choose 2}}=2(\frac{1}{n-\lfloor nt \rfloor}-\frac{1}{n})n\rightarrow 2(\frac{1}{1-t}-1),$$

as $n$ tends to $\infty$. There exist a constant $K>0$, such that,
$${\text{Var}}(nA_{\lfloor nt \rfloor}^{(n)})=\sum_{k=n-\lfloor nt \rfloor+1}^{n}n^{2}(\frac{1}{{k \choose 2}})^{2}\leq \frac{K}{n}.$$

So we deduce that 
$$nA_{\lfloor nt \rfloor}^{(n)}\stackrel{L^{2}}{\rightarrow}2(\frac{1}{1-t}-1)=\frac{2t}{1-t}:=f(t)$$
as $n$ converges to $\infty$.

We denote by $f^{-1}(t):=t/(t+2)$ the inverse function of $f(t)$. 

Similarly, $R^{(n)}$ is decreasing,  so 
$$\mathbb{P}(R_{A_{\lfloor nf^{-1}(t-\delta)^{(n)}\rfloor}}^{(n)}\leq R_{tn^{-1}}^{(n)} \leq R^{(n)}_{A_{\lfloor nf^{-1}(t+\delta)^{(n)}\rfloor}})\rightarrow 1,$$

as $n$ tends to $\infty$ for any $0<\delta<t$.

So $\frac{R_{tn^{-1}}^{(n)}}{n}-\frac{R_{A_{\lfloor nf^{-1}(t)\rfloor}^{(n)}}^{(n)}}{n}\stackrel{d}{\rightarrow}0.$

Furthermore,
$$\frac{R_{A_{nf^{-1}(t)}^{^{(n)}}}^{(n)}}{n}=\frac{Y_{\lfloor nf^{-1}(t)\rfloor}^{(n)}}{n}=\frac{n-\lfloor nf^{-1}(t)\rfloor}{n}\rightarrow 1-f^{-1}(t)=\frac{1}{1+t/2},$$
as $n$ tends to $\infty$.
So $\frac{R_{tn^{-1}}^{(n)}}{n}\stackrel{d}{\rightarrow} \frac{1}{1+t/2}$.

Using again the decreasing property of $R^{(n)}_{t}$, we finish the proof.
\end{proof}


\section{The length of an external branch picked at random}
Dynamics of  any exchangeable coalescent with multiple mergers are characterized by rates $\lambda_{b,k}$ which suit a consistent relationship (this is Pitman's structure theorem, see \cite{MR1742892}, Lemma 18):
\begin{equation}\label{eq:pre}
\lambda_{b,k}=\lambda_{b+1,k+1}+\lambda_{b+1,k}.
\end{equation}
This relationship comes from the fact that $k$ given merging blocks among $b$ can coalesce in two ways while revealing an extra block : either the coalescence event implies the extra block (and then $k+1$ blocks will merge) or not.
Thus we get a recursive construction of the $n$-coalescent process $\Pi^{(n)}$. 

Let us define $\Pi^{(n,2)}$ as the coalescent process of individuals labelled from $2$ to $n$. 
Now we consider the individual labelled by $1$. The lineage of this individual  can be 'connected' to $\Pi^{(n,2)}$ 
\begin{itemize}
\item either at any of its jump times, in which case block $\{1\}$ participates to a multiple merger implying at least 3 blocks, and we call this collision ``Type 1'' (see Figure~\ref{fig:graph1}), 
\item or  at any other time to one of the present blocks and then participates to a binary collision, and we call it ``Type 2'' (see Figure~\ref{fig:graph2}).
\end{itemize}

\begin{figure}[h]
\begin{center}
\begin{minipage}{\linewidth}
\centering 
\includegraphics[scale=0.5]{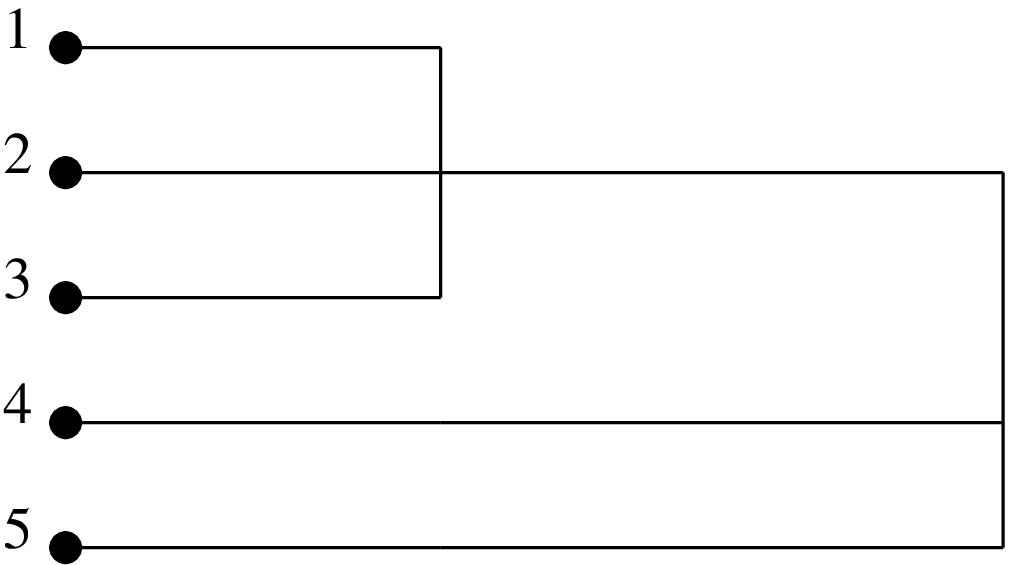}
\caption{$n=5$. Individual $1$ is chosen. Type $1$: individual $1$ encounters a multiple collision.} \label{fig:graph1}
\end{minipage}
\hfill
\begin{minipage}{\linewidth}
\centering 
\includegraphics[scale=0.5]{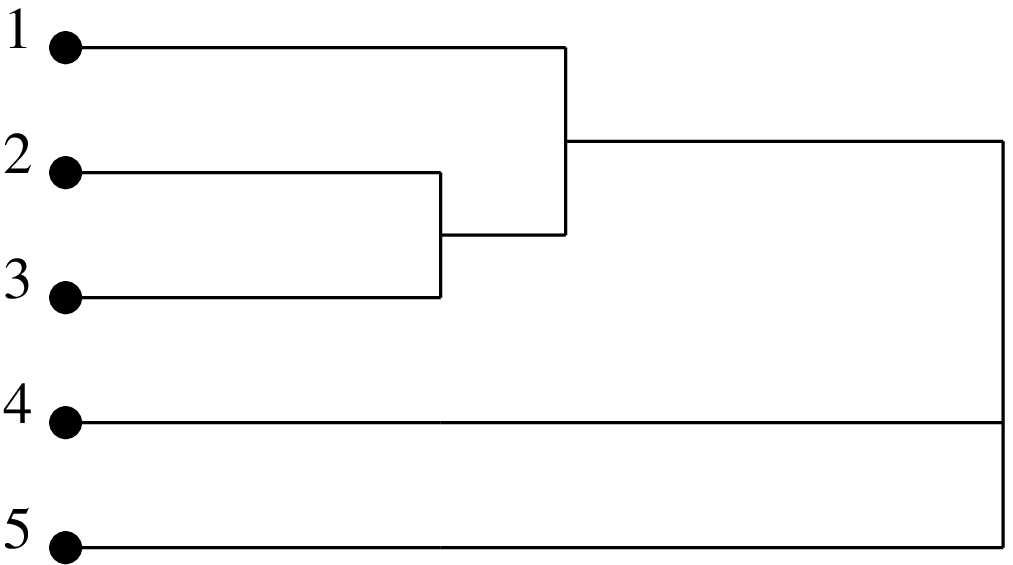}
\caption{$n=5$. Individual $1$ is chosen. Type $2$: individual $1$ encounters a binary collision} \label{fig:graph2}
\end{minipage}
\end{center}
\end{figure}

From now on, our analysis is conditional on $\Pi^{(n,2)}$.  Between two jump times of $\Pi^{(n,2)}$, assuming that there are $b$ blocks in $\Pi^{(n,2)}$, the extra block coalesces at rate $b\lambda_{b+1,2}$. 
If the extra block remains unconnected just before a coalescence event implying $k$ blocks among $b$, then it will participate to this event with probability 
\begin{equation}\label{eq:pourmail}
\frac{\int_{0}^{1}x^{k+1}(1-x)^{b-k}\nu(dx)/g_{b}}{\int_{0}^{1}x^{k}(1-x)^{b-k}\nu(dx)/g_{b}}
=1-\frac{\lambda_{b+1,k}}{\lambda_{b,k}}.
\end{equation}
This equality comes from \eqref{eq:pre}. Let us see how to get the law of $T^{(n)}$, the coalescence time of individual $1$.  We define by $R^{(n,2)}$  the block counting process of $\Pi^{(n,2)}$. Notice that it has the same law as $R^{(n-1)}$.  We introduce 
\begin{itemize}
\item $T_{c}^{(n)}$ the first jump time of a Poisson process $\eta_{c}^{(n)}$ directed by the measure  $\nu^{(n)}_{c}=R_{t}^{(n,2)}\lambda_{R_{t}^{(n,2)}+1,2}dt$;
\item $T_{d}^{(n)}$ the time of the first appearance of 'Head' in the following coin flip, independent of $\eta_{c}^{(n)}$ : at each jump time $t$ of  $R_{t}^{(n,2)}$, we toss a coin, and get 'Head' with probability 
$1-\frac{\lambda_{R_{t-}^{(n,2)}+1,R_{t-}^{(n,2)}-R_{t}^{(n,2)}+1}}{\lambda_{R_{t-}^{(n,2)},R_{t-}^{(n,2)}-R_{t}^{(n,2)}+1}}$ 
and 'Tail' with probability 
$\frac{\lambda_{R_{t-}^{(n,2)}+1,R_{t-}^{(n,2)}-R_{t}^{(n,2)}+1}}{\lambda_{R_{t-}^{(n,2)},R_{t-}^{(n,2)}-R_{t}^{(n,2)}+1}}$ (see (\ref{eq:pourmail})).
\end{itemize}
Then, conditionally on $\Pi^{(n,2)}$, $T^{(n)}$ and $T_{c}^{(n)}\wedge T_{d}^{(n)}$ have the same law.

\begin{rem}
A more formal way to interpret $T^{(n)}$ is as follow. Let $\xi^{(n)}$ be Cox process directed by random measure $\nu_{c}^{(n)}+\nu_{d}^{(n)}$, where 
$\nu^{(n)}_{d}=
\sum_{\{t \text{ is a jump time\}}}\frac{\lambda_{R_{t-}^{(n,2)}+1,R_{t-}^{(n,2)}-R_{t}^{(n,2)}+1}}{\lambda_{R_{t-}^{(n,2)},R_{t-}^{(n,2)}-R_{t}^{(n,2)}+1}}\delta_{t}$, and $\delta_{t}$ is the Dirac measure in $t$ (see \cite[p.226]{MR1876169}). Then $T^{(n)}$ has the same law as the first jump time of $\xi^{(n)}$
\end{rem}
Let us now give our main result

\begin{theo}
\label{thm2}
The following convergence holds :
 $$n^{\alpha-1}\teen
 \overset{d}{\to}
 T=\inv{C_0\Gamma(2-\alpha)}((1-\sigma)^{1-\alpha}-1),
$$
for $n\to\infty$.
 The density function of $T$ is
$$f_{T}(t)=\frac{\alpha C_0\Gamma(2-\alpha)}{\alpha-1}(1+C_0\Gamma(2-\alpha){t})^{-\frac{\alpha}{\alpha-1}-1},\quad t\geq 0.$$
In particular, in the $Beta(2-\alpha,\alpha)$ case, the density is
$$f_{T}(t)=\frac{1}{(\alpha-1)\Gamma(\alpha)}(1+\frac{t}{\alpha\Gamma(\alpha)})^{-\frac{\alpha}{\alpha-1}-1},\quad t\geq 0.$$
\end{theo}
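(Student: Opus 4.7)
The plan is to work conditionally on $\Pi^{(n,2)}$ and exploit the Cox process representation set up just above the theorem. Conditionally on $\Pi^{(n,2)}$, $T^{(n)}$ is distributed as $T_c^{(n)}\wedge T_d^{(n)}$, giving
\begin{align*}
\P\!\left(T^{(n)}>s\mid\Pi^{(n,2)}\right)
&=\exp\!\left(-\int_0^s R_u^{(n,2)}\lambda_{R_u^{(n,2)}+1,2}\,du\right)\\
&\quad\times\prod_{\substack{u\leq s,\\ u\text{ jump of }R^{(n,2)}}}\frac{\lambda_{R_{u-}^{(n,2)}+1,K_u}}{\lambda_{R_{u-}^{(n,2)},K_u}},
\end{align*}
with $K_u=R_{u-}^{(n,2)}-R_u^{(n,2)}+1$. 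Taking $s=tn^{1-\alpha}$, the goal is to show this conditional survival probability converges in probability to $(1+C_0\Gamma(2-\alpha)t)^{-\alpha/(\alpha-1)}$; bounded convergence then yields the unconditional limit, and a direct tail computation from $T=(C_0\Gamma(2-\alpha))^{-1}((1-\sigma)^{1-\alpha}-1)$ with $\sigma\sim\text{Beta}(1,\alpha)$ verifies that this limit is indeed $\P(T>t)$. Differentiating gives the density $f_T$, and substituting $C_0=1/(\alpha\Gamma(\alpha)\Gamma(2-\alpha))$, which is read off from the density of $\text{Beta}(2-\alpha,\alpha)$ near $0$, produces the final displayed formula.

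For the continuous factor, I would first establish $b\lambda_{b+1,2}\sim\alpha C_0\Gamma(2-\alpha)b^{\alpha-1}$ by writing $\lambda_{b+1,2}=\int_0^1(1-x)^{b-1}\Lambda(dx)$ and using hypothesis \reff{eq:hyp} in a Beta-integral argument parallel to that of \reff{eq:rdlg}. Changing variable $u=n^{1-\alpha}v$ and invoking Theorem \ref{th:R} (applied to the $(n-1)$-coalescent $\Pi^{(n,2)}$) to substitute $n^{-1}R^{(n,2)}_{n^{1-\alpha}v}$ by $(1+C_0\Gamma(2-\alpha)v)^{-1/(\alpha-1)}$ uniformly on $[0,t]$, the rescaled integral converges in probability to $\int_0^t\alpha C_0\Gamma(2-\alpha)(1+C_0\Gamma(2-\alpha)v)^{-1}\,dv=\alpha\log(1+C_0\Gamma(2-\alpha)t)$, so the continuous factor tends to $(1+C_0\Gamma(2-\alpha)t)^{-\alpha}$.

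The discrete factor is where the main work lies. An analogous Beta-integral expansion yields $\lambda_{b+1,k}/\lambda_{b,k}\sim(b+1-k)/(b+1-\alpha)$ for $k\ll b$, hence $\log(\lambda_{b+1,k}/\lambda_{b,k})=-(k-\alpha)/b+O(1/b^2)$. Summing over the jumps of $R^{(n,2)}$ up to time $tn^{1-\alpha}$---which by the proof strategy of Theorem \ref{th:R} corresponds to indices $i=1,\ldots,\lfloor nr(t)\rfloor$ with $r(t)$ given in \reff{eq:defrt}---yields $-\sum_{i=1}^{\lfloor nr(t)\rfloor}(X_i^{(n)}+1-\alpha)/Y_{i-1}^{(n)}$. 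Decomposing this as the deterministic proxy $(\tfrac{1}{\alpha-1}+1-\alpha)\sum 1/Y_{i-1}^{(n)}=\tfrac{\alpha(2-\alpha)}{\alpha-1}\sum 1/Y_{i-1}^{(n)}$ plus the centered remainder $\sum(X_i^{(n)}-\tfrac{1}{\alpha-1})/Y_{i-1}^{(n)}$, Lemma \ref{lem:calcul1} with $\eta=1$ gives the first piece's convergence to $\tfrac{\alpha(2-\alpha)}{\alpha-1}\log(1+C_0\Gamma(2-\alpha)t)$, while the centered remainder should vanish via a martingale/$\alpha$-stable argument analogous to Lemma \ref{lem:calcul2}. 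The discrete factor thus converges to $(1+C_0\Gamma(2-\alpha)t)^{-\alpha(2-\alpha)/(\alpha-1)}$, and its product with the continuous contribution has total exponent $-\alpha-\alpha(2-\alpha)/(\alpha-1)=-\alpha/(\alpha-1)$, matching $\P(T>t)$. The principal obstacle is controlling the log-expansion error uniformly over all jumps, especially the rare large ones where $k$ need not be small compared to $Y_{i-1}^{(n)}$, together with the centered-sum estimate; both steps demand a careful adaptation of the fine estimates from Section 3.
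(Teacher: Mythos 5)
Your proposal is correct and follows essentially the same route as the paper's Section~5 proof: the Cox-process representation conditional on $\Pi^{(n,2)}$, the split into a continuous factor handled via Theorem~\ref{th:R} and a discrete factor handled by the machinery of Theorem~\ref{th:cvsig} (Lemmas~\ref{lem:calcul1}--\ref{lem:calcul3}), with the exponents $-\alpha$ and $-\alpha(2-\alpha)/(\alpha-1)$ combining to $-\alpha/(\alpha-1)$. In fact you supply more detail than the paper does for the product term (which the paper leaves to the reader, citing the method of Theorem~\ref{th:cvsig}), and you correctly identify the two delicate points there --- the uniform control of the log-expansion for large mergers and the vanishing of the centered sum --- which are exactly the issues resolved by Lemmas~\ref{lem:calcul3} and~\ref{lem:calcul2} respectively; note only a dropped minus sign in your intermediate expression for the deterministic proxy, which does not affect the stated limit.
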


\begin{proof}
For the sake of simplicity, we will make the proof only in the $Beta(2-\alpha,\alpha)$ case. The proof can be extended to the more general case where \eqref{eq:hyp} is satisfied with the details omitted here.
In this special case, $C_0=(\alpha\Gamma(\alpha)\Gamma(2-\alpha))^{-1}$ and dynamics are given by
$$\lambda_{b,k}=\frac{B(k-\alpha,b-k+\alpha)}{B(\alpha,2-\alpha)},$$
where $B(a,b)$ is a Beta function of parameters $a$ and $b$.

Define ${r}^{(n,2)}_t$ as the number of jumps of the process $\Pi^{(n,2)}$ up to time $n^{1-{\alpha}}t$.
It is a straightforward consequence of Proposition \ref{timerescaled} that

\begin{equation}
\label{newr}
\frac{{r}^{(n,2)}_t}{n} \overset{\P}{\to}r(t),\quad n\to\infty
\end{equation}
for $t\to\infty$,
where $r(t)$ is defined in \eqref{eq:defrt}.

For $i\geq0$, in the process $\Pi^{(n,2)}$, we denote by $Y_{i}^{(n,2)}$ the number of blocks remaining after $i$ jumps which equals $1$ from the time all individuals are coalesced to $1$, and $Y_{0}^{(n,2)}=n-1$.
Let $X_{i}^{(n,2)}= Y_{i-1}^{(n,2)}-Y_{i}^{(n,2)}$ be the number of blocks we lose during the $i$th coalescent event.
We write $\ixen_0=0$.
Notice that $(Y^{(n,2)},X^{(n,2)})$ has the same law as $((Y^{(n-1)},X^{(n-1)}))$ 
.

Using the description  given above, we have
\begin{eqnarray*}
&&\P(n^{\alpha-1}T^{(n)}>t)\\
&=&\mathbb{E}[\mathbb{P}(n^{\alpha-1}(T_{c}^{(n)}\wedge T_{d}^{(n)})>t|\Pi^{(n,2)})]\\
&=&\mathbb{E}[\mathbb{P}(n^{\alpha-1}T_{c}^{(n)}>t|\Pi^{(n,2)})\mathbb{P}(n^{\alpha-1}T_{d}^{(n)}>t|\Pi^{(n,2)})]\\
&=&\mathbb{E}[\exp(-\int_{0}^{t}\int_{0}^{1}n^{1-\alpha}R_{sn^{1-\alpha}}^{(n,2)}x^{2}(1-x)^{R_{sn^{1-\alpha}}^{(n,2)}-1}\nu(dx)ds)
\prod_{i=1}^{r^{(n)}_t}\frac{\lambda_{1+Y_{i-1}^{(n,2)},1+X_i^{(n,2)}}}{\lambda_{Y_{i-1}^{(n,2)},1+X_{i}^{(n,2)}}}]\\
&=&\mathbb{E}[\exp(-\int_{0}^{t}n^{1-\alpha}R_{sn^{1-\alpha}}^{(n,2)}\frac{B(2-\alpha,R_{sn^{1-\alpha}}^{(n,2)}+\alpha-1)}{B(2-\alpha,\alpha)}ds)
\prod_{i=1}^{{r}^{(n)}_t}\frac{Y_{i-1}^{(n,2)}-X_{i}^{(n,2)}+\alpha-1}{Y_{i-1}^{(n,2)}}].
\end{eqnarray*}

We decompose the term in the expectation into two parts: the exponential on one side and the product on the other.

Let us first look at the exponential term.
Using Stirling's formula we get that, for $0\leq s\leq t$,
$$
n^{1-\alpha}R_{sn^{1-\alpha}}^{(n,2)}\frac{B(2-\alpha,R_{sn^{1-\alpha}}^{(n,2)}+\alpha-1)}{B(2-\alpha,\alpha)}
=n^{1-\alpha}\frac{(R_{sn^{1-\alpha}}^{(n,2)})^{\alpha-1}}{\Gamma(\alpha)}+(\frac{R_{sn^{1-\alpha}}^{(n,2)}}{n})^{\alpha-1}f(R_{sn^{1-\alpha}}^{(n,2)}),
$$
where $f=f(t)_{\{t\geq0\}}$ is a deterministic function which converges to $0$ as $t$ converges to $\infty$.
The sequence $(R_{sn^{1-\alpha}}^{(n,2)},n\geq2)$ is decreasing so, thanks to Theorem \ref{th:R}, we deduce that $\sup_{0\leq s \leq t}(\frac{R_{sn^{1-\alpha}}^{(n,2)}}{n})^{\alpha-1}f(R_{sn^{1-\alpha}}^{(n,2)})
$
converges in probability to $0$ as $n$ tends to $\infty$. 
Consequently, using again Theorem \ref{th:R}, we get that 
\begin{equation}
\label{part1}
\exp(-\int_{0}^{t}n^{1-\alpha}R_{sn^{1-\alpha}}^{(n,2)}\frac{B(2-\alpha,R_{sn^{1-\alpha}}^{(n,2)}+\alpha-1)}{B(2-\alpha,\alpha)}ds)
\overset{\P}{\to}
(1+\frac{t}{\alpha\Gamma(\alpha)})^{-\alpha},\quad n\to\infty.
\end{equation}

Convergence of the product term is obtained by the same method as in proof of Theorem \ref{th:cvsig},
combined with the convergence in \eqref{newr}. To avoid showing almost the same reasoning, we leave the details to readers. This way, we have
\begin{equation}
\label{part2}
\prod_{i=1}^{{r}^{(n,2)}_t}\frac{Y_{i-1}^{(n,2)}-X_{i}^{(n,2)}+\alpha-1}{Y_{i-1}^{(n,2)}}
\overset{\P}{\to}
(1+\frac{t}{\alpha\Gamma(\alpha)})^{-\frac{\alpha(2-\alpha)}{\alpha-1}},\quad n\to\infty.
\end{equation}

The product of (\ref{part1}) and (\ref{part2}) then 
converges in probability to $(1+\frac{t}{\alpha\Gamma(\alpha)})^{-{\alpha}/(\alpha-1)}$.
Since 
this product is bounded, we get that
$$\P(n^{\alpha-1}\teen>t)
\overset{}{\to}
(1+\frac{t}{\alpha\Gamma(\alpha)})^{-\frac{\alpha}{\alpha-1}},\quad n\to\infty.
$$
We achieve the proof.
\end{proof}
As a consequence of Theorem~\ref{thm2}, we can get an asymptotic result on the size of the population at the moment of collision of individual $1$.

\begin{cor}
\label{cor:Y_sigma}
The following convergence holds :
\[
n^{-1}{Y_{\sigma^{(n)}}^{(n)}}
 \overset{d}{\to}
(1+C_0\Gamma(2-\alpha)T)^{-1/(\alpha-1)}=1-\sigma
\]
for $n\to\infty$.
Moreover, the density function of this limit is $\alpha x^{\alpha-1}\ind_{\{0\leq x \leq 1\}}$.
\end{cor}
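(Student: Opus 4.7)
The plan is to identify $Y_{\sigma^{(n)}}^{(n)}$ with the block-counting process evaluated at the random time $T^{(n)}$, then combine the uniform small-time convergence of $R^{(n)}$ from Theorem \ref{th:R} with the scaling limit of $T^{(n)}$ from Theorem \ref{thm2}.

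First I would observe that, by the very definition of $T^{(n)}$ and $\sigma^{(n)}$, we have
\begin{equation*}
Y_{\sigma^{(n)}}^{(n)} = R^{(n)}_{T^{(n)}} = R^{(n)}_{(n^{\alpha-1}T^{(n)})\cdot n^{1-\alpha}},
\end{equation*}
since $\sigma^{(n)}$ is by construction the index of the jump at which the block $\{1\}$ merges, occurring precisely at time $A_{\sigma^{(n)}}^{(n)}=T^{(n)}$. Set $S_n := n^{\alpha-1}T^{(n)}$ and $g(s) := (1+C_0\Gamma(2-\alpha)s)^{-1/(\alpha-1)}$. The aim is then to prove $n^{-1}R^{(n)}_{S_n n^{1-\alpha}} \xrightarrow{d} g(T)$, where $T$ is the limit variable from Theorem \ref{thm2}.

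Next I would run a standard truncation/continuous-mapping argument. Theorem \ref{thm2} gives $S_n \xrightarrow{d} T$, so for any $\varepsilon>0$ one can choose $t_0$ large enough that $\mathbb{P}(S_n > t_0) < \varepsilon$ for all $n$ large. On the event $\{S_n \leq t_0\}$, Theorem \ref{th:R} provides
\begin{equation*}
\bigl|\,n^{-1}R^{(n)}_{S_n n^{1-\alpha}} - g(S_n)\,\bigr|
\;\leq\; \sup_{0\leq s\leq t_0}\bigl|\,n^{-1}R^{(n)}_{sn^{1-\alpha}} - g(s)\,\bigr|
\;\xrightarrow{\mathbb{P}}\; 0.
\end{equation*}
Since $g$ is continuous and bounded on $[0,\infty)$, the continuous mapping theorem gives $g(S_n) \xrightarrow{d} g(T)$. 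A routine Slutsky-type combination of the last two displays yields $n^{-1}R^{(n)}_{S_n n^{1-\alpha}} \xrightarrow{d} g(T)$.

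Finally I would compute $g(T)$ explicitly and identify its law. Plugging $T = \frac{1}{C_0\Gamma(2-\alpha)}\bigl((1-\sigma)^{1-\alpha}-1\bigr)$ into $g$ gives $1+C_0\Gamma(2-\alpha)T = (1-\sigma)^{1-\alpha}$, hence $g(T) = (1-\sigma)^{-(1-\alpha)/(\alpha-1)} = 1-\sigma$. Since $\sigma \sim \mathrm{Beta}(1,\alpha)$ has density $\alpha(1-x)^{\alpha-1}\mathbf{1}_{[0,1]}(x)$, a direct change of variable shows that $1-\sigma$ has density $\alpha x^{\alpha-1}\mathbf{1}_{[0,1]}(x)$, completing the corollary.

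The only genuinely delicate point is the composition of the uniform-on-compacts convergence in probability of $n^{-1}R^{(n)}_{\cdot\, n^{1-\alpha}}$ with the convergence in distribution of $S_n$; the truncation at $t_0$ and the continuity and boundedness of $g$ make this straightforward, but care is needed because $R^{(n)}$ is evaluated at a random (not deterministic) time.
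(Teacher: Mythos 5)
Your proposal is correct and follows essentially the same route as the paper: identify $Y_{\sigma^{(n)}}^{(n)}=R^{(n)}_{T^{(n)}}=R^{(n)}_{n^{1-\alpha}(n^{\alpha-1}T^{(n)})}$, truncate at a level $t_0$, and combine the uniform-on-$[0,t_0]$ convergence in probability from Theorem \ref{th:R} with the distributional convergence of $n^{\alpha-1}T^{(n)}$ from Theorem \ref{thm2}, finishing with the algebraic identity $g(T)=1-\sigma$ and the Beta density computation. If anything, your write-up spells out the Slutsky-type composition step more carefully than the paper's own (rather terse) argument does.
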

\begin{proof}In terms of block counting process, we have $Y_{\sigma^{(n)}}^{(n)}=R_{T^{(n)}}^{(n)}$. 
Notice that ${R_{T^{(n)}}^{(n)}}={R_{n^{1-\alpha}(n^{\alpha-1}T^{(n)})}^{(n)}}$. Using Theorem \ref{thm2}, we known that $n^{\alpha-1}T^{(n)}$ converges in distribution to $T$. Hence, if $t_0>0$, we deduce from Theorem \ref{th:R} that 
\[
\ind_{\{n^{\alpha-1}T^{(n)}<t_0\}} \frac{R_{n^{1-\alpha}(n^{\alpha-1}T^{(n)})}^{(n)}}{n}
\overset{d}{\to}
\ind_{\{T<t_0\}}(1+C_0\Gamma(2-\alpha)T)^{-1/(\alpha-1)}.
\]
This achieves the proof.
\end{proof}

\section{An alternative proof for Theorem \ref{thm2}}
In this section, we present an alternative proof for Theorem \ref{thm2} using the convergence results for $\sigma^{(n)}$ from Theorem~\ref{th:cvsig}. First, we need a stronger version of Proposition \ref{timerescaled} which gives weak convergence in the path space.  Recall that  
$$A^{(n)}_k=\sum_{i=1}^{k\wedge\tau^{(n)}}\frac{e_{i}}{g_{Y_{i-1}^{(n)}}},$$
where the $e_{i}$'s are independent standard exponential variables. 
\begin{prop}
\label{timerescaled2}
We assume  that $\rho(t)=C_0t^{-\alpha}  +O(t^{-\alpha+\zeta})$ for  some 
  $C_0>0$ and $\zeta>1-1/\alpha$.
Then, for any $t<\alpha-1$, we have
\begin{equation}\label{aenpaths}
(n^{\alpha-1}\aen_{\lfloor ns\rfloor})_{s\leq t} 
 \overset{d}{\to}
 (\inv{C_0\Gamma(2-\alpha)}((1-\frac{s}{\alpha-1})^{1-\alpha}-1))_{s\leq t},
 \end{equation}
  in the sense of convergence in the path space $D[0,t]$ for $n\to\infty$.
\end{prop}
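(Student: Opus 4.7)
The plan is to strengthen the pointwise convergence in probability of Proposition~\ref{timerescaled} into a uniform-in-$s$ convergence on $[0,t]$, and then use continuity of the limit to deduce convergence in law in $D[0,t]$. The strategy is to redo the three-step approximation $A^{(n)} \approx \tilde A^{(n)} \approx \hat A^{(n)}$ used in the proof of Proposition~\ref{timerescaled}, but now tracking the supremum over $s$ at each step.

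First I would handle $\hat A^{(n)}$ directly via Lemma~\ref{lem:calcul1}(1) applied with $\eta=\alpha$. A direct integration gives $\nu_\alpha(s)=(1-s/(\alpha-1))^{1-\alpha}-1$, hence
\[
\sup_{0\leq s\leq t}\Bigl| n^{\alpha-1}\hat A^{(n)}_{\lfloor ns\rfloor} - \tfrac{1}{C_0\Gamma(2-\alpha)}\bigl((1-\tfrac{s}{\alpha-1})^{1-\alpha}-1\bigr)\Bigr| \xrightarrow{\P} 0.
\]
Next, to handle $\tilde A^{(n)} - \hat A^{(n)}$ uniformly, I would write $g_b^{-1} = (C_0\Gamma(2-\alpha))^{-1} b^{-\alpha} + O(b^{-\alpha-\min(\zeta,1)})$ using \eqref{eq:rdlg}, so that
\[
|\tilde A^{(n)}_k - \hat A^{(n)}_k| \leq C\sum_{i=1}^{k\wedge\tau^{(n)}}(Y^{(n)}_{i-1})^{-\alpha-\min(\zeta,1)}.
\]
The right-hand side is non-decreasing in $k$, so its supremum over $s\leq t$ is attained at $s=t$; Lemma~\ref{lem:calcul1}(1) with $\eta=\alpha+\min(\zeta,1)$ then yields $\sup_{s\leq t} n^{\alpha-1}|\tilde A^{(n)}_{\lfloor ns\rfloor} - \hat A^{(n)}_{\lfloor ns\rfloor}| = O_\P(n^{-\min(\zeta,1)})\to 0$.

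The most delicate step is $A^{(n)} - \tilde A^{(n)}$, which requires replacing the pointwise $L^2$ bound of Lemma~\ref{lem:app2} by a sup-in-$s$ bound. Conditionally on $\cy$, the process
\[
M^{(n)}_k := A^{(n)}_k - \tilde A^{(n)}_k = \sum_{i=1}^{k\wedge\tau^{(n)}} \frac{e_i-1}{g_{Y^{(n)}_{i-1}}}
\]
is a zero-mean $L^2$-martingale, so Doob's maximal inequality gives
\[
\E\Bigl[\sup_{0\leq s\leq t} (n^{\alpha-1}M^{(n)}_{\lfloor ns\rfloor})^2\,\Big|\,\cy\Bigr] \leq 4 n^{2(\alpha-1)} \sum_{i=1}^{\lfloor nt\rfloor\wedge\tau^{(n)}} g_{Y^{(n)}_{i-1}}^{-2}.
\]
Combining \eqref{eq:rdlg} with Lemma~\ref{lem:calcul1}(1) for $\eta=2\alpha$, the right-hand side is $O_\P(n^{-1})$, so $\sup_{s\leq t} n^{\alpha-1}|M^{(n)}_{\lfloor ns\rfloor}|\xrightarrow{\P} 0$.

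Summing the three uniform estimates, the process $(n^{\alpha-1}A^{(n)}_{\lfloor ns\rfloor})_{s\leq t}$ converges uniformly in probability on $[0,t]$ to the deterministic continuous path $s\mapsto \frac{1}{C_0\Gamma(2-\alpha)}((1-\frac{s}{\alpha-1})^{1-\alpha}-1)$. Since the Skorokhod $J_1$ distance on $D[0,t]$ is dominated by the supremum norm and the limit is continuous and deterministic, this uniform convergence in probability upgrades automatically to convergence in distribution in $D[0,t]$, yielding \eqref{aenpaths}. The main obstacle is precisely Step~3: the martingale structure of $A^{(n)}-\tilde A^{(n)}$ conditional on $\cy$ is what permits replacing the pointwise variance control by Doob's inequality, whereas Steps~1 and~2 are essentially free because Lemma~\ref{lem:calcul1}(1) already produces a $\sup_{s\leq t_0}$ estimate and the dominating sums are monotone in $s$.
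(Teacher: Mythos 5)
Your proof is correct, but it takes a genuinely different route from the paper. The paper keeps the pointwise statement of Proposition~\ref{timerescaled}, deduces convergence of the finite-dimensional distributions, and then verifies the tightness criteria of Billingsley (conditions (i) and (7.12) of \cite[Ch.~7]{MR1700749}), exploiting only the monotonicity of $s\mapsto \aen_{\lfloor ns\rfloor}$ and the continuity of the limit. You instead upgrade each of the three approximation steps to a supremum-in-$s$ estimate: Lemma~\ref{lem:calcul1}(1) already delivers a $\sup_{s\leq t_0}$ bound for the $\haen$ term, the $\taen-\haen$ error is monotone in $k$ so its supremum sits at the endpoint, and the $\aen-\taen$ martingale term is controlled by Doob's maximal inequality conditionally on $\cy$ (which is in fact exactly the computation already written in the proof of Lemma~\ref{lem:app2}, where the conditional expectation of $\sup_{t\geq 0}$ of the squared increment appears). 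Uniform convergence in probability to a continuous deterministic path then dominates the Skorokhod distance and yields convergence in law in $D[0,t]$. Your argument is sound at every step and in fact proves a strictly stronger statement (uniform convergence in probability rather than mere weak convergence in the path space); it also avoids the slightly delicate bookkeeping of the increment condition in the tightness criterion. What the paper's route buys is that it recycles Proposition~\ref{timerescaled} as a black box without reopening its proof; what yours buys is a cleaner, quantitative, and stronger conclusion at essentially no extra cost, since all the needed supremum estimates are already implicit in Lemmas~\ref{lem:calcul1}, \ref{lem:app1} and \ref{lem:app2}.
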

\begin{proof}
Note that Theorem \ref{timerescaled} states
$$n^{\alpha-1}\aen_{\lfloor ns\rfloor}\overset{\P}{\to}
 (\inv{C_0\Gamma(2-\alpha)}((1-\frac{s}{\alpha-1})^{1-\alpha}-1)),$$
for $0<s<\alpha-1$ and $n\to\infty$. 
So for every fixed $s\in[0,t]$, we have pointwise convergence in probability in (\ref{aenpaths}). This implies weak convergence of all finite dimensional distributions due to the subsequence criterion for weak convergence. In order to show weak convergence in the path space, we will show tightness for the distributions from (\ref{aenpaths}). Since the limit process is continuous, it suffices to show that the condition (i) of \cite[Theorem 7.3]{MR1700749} and condition (7.12) from \cite[Corollary 7.4]{MR1700749} are fulfilled (see \cite[Corollary 13.4]{MR1700749}). For the present processes, these conditions translate to showing that for every $\epsilon>0$ and $\eta>0$,
\begin{itemize}
\item[(i)] there exists $a>0$ s.t. $P(n^{\alpha-1}\aen_{\lfloor 0 \rfloor}\geq a)\leq \eta$ for $n$ big enough and 
\item[(ii)] there exists a $0<\delta<1$ so that $$\delta^{-1}P\left(n^{\alpha-1}(\aen_{\lfloor n\cdot\min(t_1+\delta,t)\rfloor}-\aen_{\lfloor n\cdot t_1 \rfloor})\geq \epsilon\right)\leq \eta,$$ for $n$ big enough and any $t_1\in[0,t]$.
\end{itemize}
Condition $(i)$ is trivially fulfilled, for condition (ii) we can use Theorem \ref{timerescaled} to show that for $n\to \infty$, 
$$P\left(n^{\alpha-1}(\aen_{\lfloor n\cdot\min(t_1+\delta,t)\rfloor}-\aen_{\lfloor n\cdot t_1\rfloor})\geq \epsilon\right)\to P(f(\min(t_1+\delta,t))-f(t_1)\geq \epsilon),$$  where $f(s):=\inv{C_0\Gamma(2-\alpha)}((1-\frac{s}{\alpha-1})^{1-\alpha}-1)$. Note that $P(f(\min(t_1+\delta,t))-f(t_1)\geq \epsilon)\leq P(f(t)-f(t-\delta)\geq \epsilon)\in\left\{0,1\right\}$. Since $f$ is continuous, you can now choose $\delta$ small enough that $f(t)-f(t-\delta)<\epsilon$ and then $n$ big enough to fulfill (ii). Thus, we have shown tightness of the distributions in (\ref{aenpaths}) which establishes the desired weak convergence   
\end{proof}
Now we come to the alternative proof of Theorem \ref{thm2}.
\begin{proof}[Alternative proof of Theorem \ref{thm2}]
Fix $t\in[0,\alpha-1)$. We have  
$$(\sigma^{(n)}/(n(\alpha-1)),(n^{\alpha-1}\aen_{\lfloor ns\rfloor})_{s\leq t})\stackrel{d}{\to}(\sigma,(\inv{C_0\Gamma(2-\alpha)}((1-\frac{s}{\alpha-1})^{1-\alpha}-1))_{s\leq t}),$$ 
for $n\to\infty$. Due to Skorohod-coupling, we can assume that this convergence also holds almost surely. Since $s\mapsto(\inv{C_0\Gamma(2-\alpha)}((1-\frac{s}{\alpha-1})^{1-\alpha}-1))$ is continuous on $[0,t]$, the almost sure convergence of $(n^{\alpha-1}\aen_{\lfloor nt \rfloor})_{s\leq t}$ in $D[0,t]$ is even almost sure uniform convergence on $[0,t]$ (see \cite[p. 124]{MR1700749}). For any series $(x_n)_{n\in\mathbb{N}}$ on [0,t] with $x_n\to x$, we thus have $$n^{\alpha-1}\aen_{\lfloor nx_n \rfloor}\to \inv{C_0\Gamma(2-\alpha)}((1-\frac{x}{\alpha-1})^{1-\alpha}-1)$$ almost surely for $n\to\infty$. The only problem left is that $\sigma^{(n)}$, $\sigma$ may take values in $[0,\alpha-1)$ and not only in some subset $[0,t]$. To remedy this, note that if we restrict all random variables on $\{ \sigma\leq \alpha-1-\frac{2}{k}\}$ for $k\in\mathbb{N}$, we have $\sigma^{(n)}(\omega)/n\leq \alpha-1-\frac{1}{k}$ for $n=n(\omega)$ big enough for almost all $\omega\in\{ \sigma\leq \alpha-1-\frac{1}{k}\}$. Thus, by using the Skorohod-coupling for the series $(\sigma^{(n)}/(n(\alpha-1)),(n^{\alpha-1}\aen_{\lfloor ns\rfloor})_{s\leq \alpha-1-\frac{1}{k}})$, we have
$$n^{\alpha-1}\aen_{\lfloor n\cdot\sigma^{(n)}/n\rfloor}\stackrel{d}{\to}\inv{C_0\Gamma(2-\alpha)}((1-\frac{\sigma}{\alpha-1})^{1-\alpha}-1),$$
almost surely on $\{\sigma\leq \alpha-1-\frac{2}{k}\}$ for the coupled versions of these random variables (note that $\sigmen\leq \ton$). Since $\sigma$ is Beta-distributed, we have, for $k\to\infty$,
$$P(\{n^{\alpha-1}\aen_{\lfloor n\cdot\sigma^{(n)}/n\rfloor}\in\cdot\}\cap \{\sigma\leq \alpha-1-\frac{2}{k}\})\sim P(n^{\alpha-1}\aen_{\lfloor n\cdot\sigma^{(n)}/n\rfloor}\in\cdot).$$
This shows 
$$n^{\alpha-1}T^{(n)}=n^{\alpha-1}\aen_{\sigma^{(n)}}\stackrel{d}{\to}\inv{C_0\Gamma(2-\alpha)}((1-\frac{\sigma}{\alpha-1})^{1-\alpha}-1).$$
The only thing left to prove is that $T$ has density $f_T$. This is done by computing the distribution function 
\begin{eqnarray*}
&&\mathbb{P}(\inv{C_0\Gamma(2-\alpha)}((1-\frac{\sigma}{\alpha-1})^{1-\alpha}-1)\leq t)\\
&=&\mathbb{P}(\sigma\leq 1-(1+C_{0}\Gamma(2-\alpha)t)^{\frac{1}{\alpha-1}})\\
&=&\int_{0}^{1-(1+C_{0}\Gamma(2-\alpha)t)^{\frac{1}{\alpha-1}}}\alpha(1-x)^{\alpha}dx\\
&=&1-(1+C_{0}\Gamma(2-\alpha)t)^{-\frac{\alpha}{\alpha-1}}.
\end{eqnarray*}
and finally by differentiating.  
\end{proof}

\bigskip
{\bf Acknowledgments.} 
Jean-St\'ephane Dhersin and Linglong Yuan  benefited from the support of the ``Agence Nationale de la Recherche'': ANR MANEGE (ANR-09-BLAN-0215). \\
Arno Siri-J\'egousse and Fabian Freund would like to thank the Haussdorf Research Institute for Mathematics for its support.\\
Fabian Freund would like to thank Luca Ferretti for a discussion concerning the interpretation of $\sigmen$.

\end{document}